%%
%% This is LaTeX2e input.
%%

%% The following tells LaTeX that we are using the 
%% style file amsart.cls (That is the AMS article style
%%
\documentclass{amsart}

\usepackage{enumitem}
\setenumerate{nolistsep}
\setitemize{nolistsep}
\usepackage[square, numbers, comma, sort&compress]{natbib}
\usepackage[final]{pdfpages}
\usepackage{graphicx}
\usepackage{esint}
\usepackage{latexsym}
\usepackage{hyperref}
\usepackage{cleveref}
\usepackage{autonum}
% Dichiarazioni matematiche 

\usepackage{amsmath}
\usepackage{amssymb}

%questo comando serve a creare note a pie' di pagina senza numero
\usepackage{lipsum}

\newcommand\blfootnote[1]{%
  \begingroup
  \renewcommand\thefootnote{}\footnote{#1}%
  \addtocounter{footnote}{-1}%
  \endgroup
}

%graphic packages
\usepackage{tikz}
\usetikzlibrary{graphs, angles, quotes,patterns,hobby}
\usepackage{pgfplots}
\pgfplotsset{compat=1.6}

 \DeclareMathOperator{\supp}{supp}

\DeclareMathOperator{\length}{length}
\DeclareMathOperator{\ep}{\epsilon}

\theoremstyle{plain}

\theoremstyle{plain}
\newtheorem*{theor*}{Theorem}

\theoremstyle{plain}
\newtheorem{prop}{Proposition}

\theoremstyle{remark}
\newtheorem{rem}{Remark}
\newtheorem*{rem*}{Remark}

\theoremstyle{definition}

\theoremstyle{plain}

\theoremstyle{remark}

\theoremstyle{plain}
\newtheorem{lemm}{Lemma}

%%
%% If some other type is need, say conjectures, then it is constructed
%% by editing and uncommenting the following.
%%

%\newtheorem{conj}[thm]{Conjecture} 

%%% 
%%% The following gives definition type environments (which only differ
%%% from theorem type invironmants in the choices of fonts).  The
%%% numbering is still tied to the theorem counter.
%%% 

\numberwithin{equation}{section}

\usepackage{hyperref}
\hypersetup{
    colorlinks=true,
    linkcolor=blue,
    filecolor=blue,      
    urlcolor=blue,
    citecolor=red,
}

\begin{document}

%%
%% The title of the paper goes here.  Edit to your title.
%%

\title{Estimates for the maximal Cauchy Integral on chord-arc curves}

%%
%% Now edit the following to give your name and address:
%%

\author{Carmelo Puliatti}
\address{BGSMath and Departament de Matem\`atiques, Universitat Aut\`onoma
de Barcelona, 08193, Bellaterra, Barcelona, Catalonia}
\email{puliatti@mat.uab.cat}
%\author{Joan Verdera}
%\address{Departament de Matematiques, Universitat Autonoma
%de Barcelona, 08193, Bellaterra, Barcelona, Catalonia}
%\email{jvm@mat.uab.cat}

\begin{abstract}
We study the chord-arc Jordan curves that satisfy the 
Cotlar-type inequality $T_*(f)\lesssim M^2(Tf),$ where $T$ is the
Cauchy transform, $T_*$ is the maximal Cauchy transform and $M$ is
the Hardy-Littlewood maximal function. Under the background
assumption of asymptotic quasi-conformality we find a
characterization of such curves in terms of the smoothness of a parametrization of the curve.
\end{abstract}

 \maketitle

%%%%%%%%%%%%%%%%%%%%%%%%%%%%%%%%%%%%%%%%%%%%%%%%%%%%%%%%%%%%%%%%%%%%%%
\section{Introduction}
%%%%%%%%%%%%%%%%%%%%%%%%%%%%%%%%%%%%%%%%%%%%%%%%%%%%%%%%%%%%%%%%%%%%%%
\blfootnote{\textit{2010 Mathematics Subject Classification.} Primary 42B20, 30C62, 28A80.
\\
\textit{Key words:} Cauchy integral, Cotlar's inequality, asymptotically conformal curve, chord-arc curve.}
Consider a homogeneous smooth Calder\'{o}n-Zygmund operator in $\mathbb{R}^n$
\begin{equation}\label{CZ}
Tf(x)= \operatorname{p.v.} \int f(x-y)\,K(y)\,dy \equiv \lim_{\epsilon\rightarrow
0} T_\epsilon f(x), \quad x \in \mathbb{R}^n,
\end{equation}
where $T_\epsilon$ is the truncation at level $\epsilon$ defined by
\begin{equation}
T_\epsilon f(x)= \int_{| y| > \epsilon} f(x-y) K(y) \,dy, \quad x \in \mathbb{R}^n,
\end{equation}
and $f$ is in $L^p(\mathbb{R}^n), \; 1 \le p < \infty.$ Here the kernel $K$ is of class $C^{\infty}$ off the origin,
homogeneous of order $-n$ and with zero integral on the unit sphere $$\{x \in \mathbb{R}^n : |x|=1\}.$$
Let $T_{*}$ be the maximal singular integral
\begin{equation}
T_{*}f(x)= \sup_{\epsilon > 0} | T_\epsilon f(x)|, \quad x \in \mathbb{R}^n,
\end{equation}

A classical fact relating $T_{*}$ and the standard Hardy-Littlewood maximal operator $M$ is Cotlar's inequality, which reads
\begin{equation}\label{Cotlar}
T_{*}(f)(x)\leq C\,\big(M(Tf)(x) + M(f)(x)\big), \quad x \in
\mathbb{R}^n.
\end{equation}
Combining this with the $L^p$ estimates  $\|T(f)\|_p \le  C \,\|f\|_p $ and  $\|M(f)\|_p \le  C \,\|f\|_p $, \; $1<p< \infty$ one gets  $\|T_{*}(f)\|_p \le  C \,\|f\|_p, \; 1<p< \infty$.

It was discovered in \cite{MOV} that if $T$ is an even  higher order Riesz transform, that is, if
$K(x) = P(x)/|x|^{n+d}$, with $P$ an even homogeneous polynomial of degree $d$, then one can get rid of the
second term in the right hand side of \eqref{Cotlar}, namely,
\begin{equation}\label{SuperCotlar}
T_{*}(f)(x)\leq C\, M(Tf)(x), \quad x \in \mathbb{R }^n.
\end{equation}
Hence  $\|T_{*}(f)\|_p \le  C \,\|T(f)\|_p, \; 1<p< \infty$, in this case. However,  if $T$ is an odd higher order Riesz transform, then
\eqref{SuperCotlar} may fail and the right substitute turns out to be (see \cite{MOPV})
\begin{equation}\label{CotlarOdd}
T_{*}(f)(x)\leq C\, M^2(Tf)(x), \quad x \in \mathbb{R }^n,
\end{equation}
where $M^2$ stands for the iteration of $M$.

Inequalities of the type \eqref{SuperCotlar} and \eqref{CotlarOdd}
were first considered in relation to the David-Semmes problem (see
\cite{MOV},\cite{MOPV} and \cite{V}) and later on were studied in
the context of the Cauchy singular integral on Lipschitz graphs and
$C^1$ curves by Girela-Sarr\'{\i}\'{o}n in \cite{G}. Let $\Gamma$ be either
a Lipschitz graph or a closed chord-arc curve in the plane, let $T$
be the Cauchy Singular Integral and $M$ the Hardy-Littlewood maximal
operator, both with respect to the arc-length measure, and let
$T_{*}$ be the maximal Cauchy Integral. Precise definitions will be
given below. Girela-Sarri\'on showed in \cite{G} that the presence
at a point $z$ of the curve of a non-zero angle prevents
\eqref{CotlarOdd}, with $x$ replaced by $z$, to hold. This agrees
with the intuition that \eqref{CotlarOdd}  should help in finding
tangent lines, but suggests that it is a condition definetely
stronger than the mere existence of tangents. It was also shown in
\cite{G} that if $\Gamma$ is a closed $C^1$ curve with the property
that the modulus of continuity $\omega(z, \delta) $ of the unit
tangent vector satisfies
\begin{equation}\label{logomega}
\omega(z,\delta) \le C\, \frac{1}{\log(\frac{1}{\delta})},  \quad z \in \Gamma, \quad  \delta <  1/2,
\end{equation}
then \eqref{CotlarOdd} holds with $x \in \mathbb{R}^n$ replaced by $z \in \Gamma$.

 In this paper we study the validity of inequality \eqref{CotlarOdd} in the context of chord-arc curves. A chord-arc curve is a
rectifiable Jordan curve $\Gamma$ in the plane with the property that there exists a positive constant $C$ such that, given any two
points $z_1, z_2 \in \Gamma$ one has
\begin{equation}\label{chordarc}
\l(z_1,z_2) \leq C \,|z_1-z_2 |,
\end{equation}
where $l(z_1,z_2)$  is the length of the shortest arc in $\Gamma$ joining $z_1$ and $z_2$.  Equivalently $\Gamma$ is a
bilipschitz image of the unit circle (see \cite{Pomm_bd}, Theorem 7.9).
 Then $\Gamma$ can be parametrized by a
periodic function $ \gamma : \mathbb{R} \rightarrow \Gamma $ of period $T$ satisfying the bilipschitz condition
\begin{equation}\label{bilipschitz}
 \frac{1}{L}\,|x-y| \leq |\gamma(x) -\gamma(y)| \leq L \,|x-y|, \quad x, y \in \mathbb{R}, \quad |x-y| \leq \frac{T}{2},
\end{equation}
for some positive constant $L$. We say, by slightly abusing language, that $\gamma$ is a bilipschitz parametrization of $\Gamma$.
One can take, for instance,  the $T$-periodic extension of the arc-length parametrization of $\Gamma$ with $T$ being the length of $\Gamma$.

 One can easily define the maximal Hardy-Littlewood operator and the Cauchy Integral on a chord-arc curve. Given $z\in \Gamma$ let
 $t \in \mathbb{R}$ be such that $z=\gamma(t).$      Set
\begin{equation}
\Gamma_{z,r}:=\gamma(\{\tau :|\tau-t|<r\}).
\end{equation}
One should look at $\Gamma_{z,r}$ as ``balls'' of
radius $r$ centered at $z$. Indeed, owing to the bilipschitz condition \eqref{bilipschitz}, each $\Gamma_{z,r}$  contains and is contained in a disc in
$\Gamma$ of radius comparable to $r$, for $r < T .$  It will be more convenient
to work with $\Gamma_{z,r}$  than with the euclidean discs $D(z,r)\cap \Gamma$, where $D(z,r)$
 stands for the planar disc of center $z$ and radius $r$.

 Denote by $\mu$ the arc-length measure on $\Gamma$. For $f\in L^1(\Gamma,\mu)$ and $z\in\Gamma,$ we define the Hardy-Littlewood maximal
 function on the curve $\Gamma$ as
\begin{equation}
Mf(z):=\underset{r>0}{\sup}\,\frac{1}{\mu(\Gamma_{z,r})}\int_{\Gamma_{z,r}}|f|d\mu.
\end{equation}
The Cauchy Integral is defined as
 \begin{equation}\label{Cauchy}
Tf(z)= \operatorname{p.v.} \frac{1}{\pi\, i}\int_\Gamma \frac{1}{w-z}\,f(w)\,dw \equiv \lim_{\epsilon\rightarrow
0} T_\epsilon f(z), \quad z \in \Gamma,
\end{equation}
 where
 \begin{equation}\label{Cauchytrunc}
T_{\epsilon}f(z)=\frac{1}{\pi i}\int_{\Gamma\backslash\Gamma_{z,\epsilon}}\frac{f(w)}{w-z}dw
\end{equation}
is the truncated Cauchy Integral at level $\epsilon$. The maximal Cauchy Integral is
\begin{equation}\label{Cauchymax}
T_*f(z):=\underset{\epsilon>0}{\sup}\big|T_{\epsilon}f(z)\big|.
\end{equation}

Our aim is to investigate under what conditions on $\Gamma$ one has the inequality
\begin{equation}
 T_* f(z)\leq C\, M^2(Tf)(z), \quad z \in \Gamma, \quad f \in L^2(\Gamma, \mu),
\end{equation}
where $C$ is a positive constant. Since we know that angles prevent
the above inequality to hold, we need to require on $\Gamma$ a
condition that excludes them. One such a condition is asymptotic
quasiconformality. Given two points $z_1, z_2 \in \Gamma$ let
$A(z_1,z_2)$ be the arc in $\Gamma$ joining the two points and
having smallest diameter (there is only one if the two points are
sufficiently close). The Jordan curve $\Gamma$ is said to be
asymptotically conformal if, given a positive number $\delta$ there
exists a positive $\epsilon$, so that for any two points $z_1, z_2
\in \Gamma$ satisfying $|z_1-z_2| < \epsilon$ one has
\begin{equation}\label{ac}
 |z_1 -z |+|z_2 -z| \leq (1+\delta)|z_1 -z_2|, \quad z \in A(z_1,z_2).
\end{equation}

Our main result reads as follows.
\begin{theor*}\label{teorem} Let $T$ be the Cauchy Integral on an asymptotically conformal  chord-arc curve $\Gamma$ and let $\gamma$ be a bilipschitz parametrization of  $\Gamma$.
 Then the estimate
\begin{equation}\label{m2}
T_*(f)(z) \leq  C\, M^2(Tf)(z), \quad z\in \Gamma, \quad f \in
L^2(\Gamma,\mu),
\end{equation}
holds if and only if there exists $C>0$ such that
\begin{equation}\label{snddiff}
\big| \gamma(x+\epsilon)+\gamma(x-\epsilon)-2\gamma(x)\big|\leq C \,
\frac{\epsilon}{|\log\epsilon|},
\end{equation}
for each $\ep$ satisfying $0 < \epsilon< T$ and for each $x\in\mathbb{R}.$
\end{theor*}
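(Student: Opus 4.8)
The proof naturally splits into the two implications, and both rest on a dictionary between the second difference $\Delta^{2}_{\epsilon}\gamma(x):=\gamma(x+\epsilon)+\gamma(x-\epsilon)-2\gamma(x)$ and two geometric quantities attached to the sub-arc $\Gamma_{z,\epsilon}$, $z=\gamma(x)$: its ``off-centredness'', measured by $\int_{x-\epsilon}^{x+\epsilon}(\gamma(s)-\gamma(x))\,ds=\int_{0}^{\epsilon}\Delta^{2}_{s}\gamma(x)\,ds$, and the angle between the chords from $z$ to $\gamma(x+\epsilon)$ and to $\gamma(x-\epsilon)$. Since \eqref{bilipschitz} gives $|\gamma(x\pm\epsilon)-z|\asymp\epsilon$, an elementary computation shows both quantities are comparable to $|\Delta^{2}_{\epsilon}\gamma(x)|/\epsilon$, so that \eqref{snddiff} says precisely that these defects are $O(1/|\log\epsilon|)$ at every scale. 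I will also use the ``propagation'' identity $\Delta^{2}_{2h}\gamma(x)=\Delta^{2}_{h}\gamma(x+h)+\Delta^{2}_{h}\gamma(x-h)+2\,\Delta^{2}_{h}\gamma(x)$ to move a bound between scales.

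For the ``if'' direction, assume \eqref{snddiff} and follow the Cotlar-type comparison of \cite{G}. One writes $T_{\epsilon}f(z)$ as the average of $Tf$ over $\Gamma_{z,\epsilon}$, the average being taken against the push-forward under $\gamma$ of normalised Lebesgue measure on $(x-\epsilon,x+\epsilon)$ — which is comparable to arc length, so that the maximal functions it produces are dominated by $M$ — plus error terms organised by dyadic scales $2^{j}\epsilon\lesssim T$. These errors arise from moving the pole off $z$ and from the near-diagonal part of the principal value; together with the smoothness of the Cauchy kernel, the off-centredness bound $O(\delta^{2}/|\log\delta|)$ and the chord-angle bound $O(1/|\log\delta|)$ from \eqref{snddiff} make the resulting series summable and controlled by $M^{2}(Tf)(z)$, the second iterate appearing because each error is an average of an average of $M(Tf)$. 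The logarithmic rate is exactly the borderline for summability, which is the same mechanism that makes \eqref{logomega} the threshold in \cite{G}. The two things to check are that the argument uses $\gamma$ only through the second differences $\Delta^{2}_{\delta}\gamma$ — so it survives even though \eqref{snddiff} does not force $\gamma\in C^{1}$ — and that the general (non-arc-length) parametrisation enters only through comparability of measures.

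For the ``only if'' direction, suppose \eqref{m2} holds with constant $C_{0}$. Testing it on $f=\mathbf{1}_{\Gamma_{z,r}}$, one checks $M^{2}(Tf)(z)\lesssim1$ uniformly in $z,r$ (the only singularities of $T\mathbf{1}_{\Gamma_{z,r}}$ are logarithmic, located at $\gamma(x\pm r)$, hence harmless after two averages), whence $\sup_{0<\epsilon<r}|T_{\epsilon}\mathbf{1}_{\Gamma_{z,r}}(z)|\lesssim C_{0}$. As $T_{\epsilon}\mathbf{1}_{\Gamma_{z,r}}(z)=\tfrac{1}{\pi i}\log\frac{(\gamma(x+r)-z)(\gamma(x-\epsilon)-z)}{(\gamma(x+\epsilon)-z)(\gamma(x-r)-z)}$, its real part is, up to a constant, the total angular variation of the chords from $z$ to $\gamma(x\pm s)$ as $s$ runs from $\epsilon$ to $r$, so this telescoped variation is $O(C_{0})$ uniformly. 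To upgrade this to a bound on the variation \emph{in absolute value} — which, through the propagation identity, is equivalent to \eqref{snddiff} — I would apply \eqref{m2} to functions $f=\sum_{j}\sigma_{j}\mathbf{1}_{A_{j}}$ built on the dyadic annular sub-arcs $A_{j}=\Gamma_{z,2^{j+1}\epsilon}\setminus\Gamma_{z,2^{j}\epsilon}$, the signs $\sigma_{j}$ (and a companion choice with opposite signs on the forward and backward halves of each $A_{j}$, to cover the case of a locally symmetric curve) chosen to rectify the per-scale contributions to $T_{\epsilon}f(z)$; here one uses the cancellation of the $\log2$-size leading parts of the two halves of each annulus, which keeps $Tf$ near $z$, and hence $M^{2}(Tf)(z)$, under control. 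Carrying this out forces \eqref{snddiff}.

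The main obstacle is the last step: a test function for which the per-scale angular contributions genuinely accumulate in $T_{*}f(z)$ wants the signs $\sigma_{j}$ aligned, whereas keeping $M^{2}(Tf)(z)$ bounded — because the leading terms no longer cancel once an annulus is split unevenly — pushes the signs to oscillate, and reconciling these, in both the ``asymmetric'' and ``symmetric'' local pictures, is where the quantitative work lies; by contrast the ``if'' direction adapts \cite{G} fairly directly once one verifies that only second differences are used. The standing hypothesis of asymptotic conformality \eqref{ac} is used to ensure $\Gamma$ has tangents everywhere — so that the logarithms above are well defined and the error terms in the Cotlar argument are genuinely small — and, at the outset, to discard the crude obstruction coming from a non-zero angle.
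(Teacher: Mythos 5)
Your proposal does not close the theorem; the gaps are concrete. The necessity direction is the heart of the matter and you leave it open by your own admission: the sign-choice scheme on dyadic annuli runs exactly into the conflict you describe (aligning signs to make the per-scale angular contributions accumulate in $T_*f(z)$ destroys the cancellations that keep $M^2(Tf)(z)$ bounded), and no way around it is offered. The paper's proof avoids this dilemma altogether by a different mechanism: after the Girela-Sarri\'on decomposition, the dangerous part of $T_\epsilon f(z)$ is a product $F(x,\epsilon)\cdot IV_\epsilon$, where
\begin{equation}
F(x,\epsilon)=\log\big(\gamma(x+\epsilon)-\gamma(x)\big)-\log\big(\gamma(x-\epsilon)-\gamma(x)\big)+\pi i ,
\end{equation}
and $IV_\epsilon$ is of size $|\log\epsilon|\,M(Tf)(z)$; since $T$ is an isomorphism of $L^2(\Gamma,\mu)$ onto itself, $Tf$ may be replaced by an arbitrary $g\in L^2$, and testing with the indicator of a tiny arc $\gamma((\epsilon^n,\epsilon))$ (so that $M^2g\le 1$ while the accompanying integral is $\gtrsim|\log\epsilon|$) yields directly $|F(x,\epsilon)|\,|\log\epsilon|\lesssim 1$. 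Note also that \eqref{snddiff} controls the full complex quantity $F$ (angle defect \emph{and} modulus ratio), not only the angular variation your test functions are designed to detect; and your claim that $M^2$ of the single indicator is bounded only gives a bound on a telescoped sum over scales, never the $1/|\log\epsilon|$ decay at each scale.

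The second gap is the role of asymptotic conformality, which you misplace. It is not used ``to ensure $\Gamma$ has tangents everywhere'' (the example in Section 5 is asymptotically conformal and has no tangent at the spiral point). It is used to prove the key comparability $|F(x,\epsilon)|\asymp|\gamma(x+\epsilon)+\gamma(x-\epsilon)-2\gamma(x)|/\epsilon$, which both implications need: $F$ is defined through a branch of $\log(w-z)$ cut along an arc $\Delta_z$ to infinity in the unbounded component of $\mathbb{C}\setminus\Gamma$, and one must construct $\Delta_z$ (via the polar-rectangle argument of Lemma~\ref{ml}) and prove the branch identity $\log(\gamma(x-\epsilon))-\pi i=\log(-\gamma(x-\epsilon))$ (Lemma~\ref{argument}) to rule out spurious multiples of $2\pi i$ coming from local winding of the curve; a chord-angle bound at a single scale, which is all \eqref{snddiff} plus \eqref{bilipschitz} gives you, does not fix the branch. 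Relatedly, in your sufficiency sketch the issue is not summability of dyadic errors: the term $IV_\epsilon$ carries an unavoidable factor $|\log\epsilon|$, and the proof works only because $|F(x,\epsilon)|\lesssim|\gamma(x+\epsilon)+\gamma(x-\epsilon)-2\gamma(x)|/\epsilon\lesssim 1/|\log\epsilon|$ cancels it exactly; without identifying $F$ and the branch analysis behind the upper bound, the adaptation of \cite{G} you describe does not go through. (Your preliminary claim that $\int_0^\epsilon\big(\gamma(x+s)+\gamma(x-s)-2\gamma(x)\big)\,ds$ is comparable to $|\Delta^2_\epsilon\gamma(x)|/\epsilon$ is also incorrect, though it is only used heuristically.)
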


One should recall that condition \eqref{snddiff} implies that
$\gamma$ is differentiable almost everywhere in the ordinary sense
and the derivative is a function of vanishing mean oscillation
(see \cite{WeissZygm}). Therefore, for chord arc curves satisfying the background
assumption of asymptotical conformality, inequality \eqref{m2} is
equivalent to the precise form of differentiability described in
terms of second order differences in \eqref{snddiff}.

In Section 2 we prove a couple of Lemmas which allow to express
condition \eqref{m2} in an equivalent form in terms of a function
related to the geometry of $\Gamma.$ Section 3 is devoted to take
care of a technical question, namely, that it is enough to estimate
truncations at small enough levels. In Section 4 we prove the
Theorem by means of three lemmas, one on them making the connection
between the function carrying the geometrical information and the
second difference condition \eqref{snddiff}. In Section 5 we present
an example of a spiralling domain that enjoys the equivalent
conditions in the Theorem but whose boundary is not of class $C^1.$

Our terminology and notation are standard. We let $C$ denote a
constant independent of the relevant variables under consideration
and which may vary at each occurrence. The notation $A \lesssim B$
means that there exists a constant $C>0$ such that $A\leq C B.$ We write $A\gtrsim B$ if $B\lesssim A.$ The disc centered at $z$ of radius $r$ is denoted by
$D(z,r)$.

\section{Two preliminary Lemmas}
The beginning of the proof follows the ideas of \cite{G}, so that we
will be rather concise. Given a function $f\in L^1(\Gamma,\mu)$ we
denote by
$m_{\Gamma_{z,\epsilon}}(f)=\fint_{\Gamma_{z,\epsilon}}f(w)\,d\mu(w)$
the mean of $f$ on $\Gamma_{z,\epsilon}$ with respect to the arc length measure $\mu$.  We let $K_{z,\epsilon}$
denote the Cauchy kernel truncated at the point $z$ at level
$\epsilon,$ that is,
\begin{equation}\label{Cauchytrun}
K_{z,\epsilon}(w)= \frac{1}{\pi \,i} \frac{1}{w-z}
\,\chi_{\Gamma\setminus \Gamma_{z,\epsilon}}(w),\quad w \in \Gamma.
\end{equation}
Set $g_{z,\epsilon}=T(K_{z,\epsilon})$ and let $M>1$ be a big number to
be chosen later. Following \cite[p.673]{G} we obtain the identity
\begin{equation}\label{identitat}
-T_{\epsilon}f(z)=I_{\epsilon} + II_{\epsilon} + III_{\epsilon},
\end{equation}
where
\begin{align}\label{123_1}
I_{\epsilon}&:=
\int_{\Gamma_{z,M\epsilon}}Tf(w)\left(g_{z,\epsilon}(w)-m_{\Gamma_{z,M\epsilon}}(g_{z, \epsilon})\right)dw,\\
\label{123_2} II_{\epsilon} &:=
m_{\Gamma_{z,M\epsilon}}(g_{z,\epsilon})\int_{\Gamma_{z,M\epsilon}}
Tf(w)dw \\    \label{123_3} III_{\epsilon} &:= \int_{\Gamma\setminus
\Gamma_{z,M\epsilon}}Tf(w)g_{z,\epsilon}(w)dw.
\end{align}
Following closely the argument in  \cite{G} one can prove
\begin{align}\label{stima1}
|I_{\epsilon}|\leq C\, M^2(Tf)(z),\\ \label{stima2}
|II_{\epsilon}|\leq C\, M(Tf)(z).
\end{align}
Since clearly $M(g) \leq M^2(g)$ for any $g$, we are left with the
task of estimating $III_\epsilon.$  The next lemma provides an
expression for $III_\epsilon$ in terms of a function encoding the
smoothness of $\Gamma.$  To state the lemma first we need to clarify
the definition of a branch of the logarithm of $w-z$, as a
function of $w$ with $z \in \Gamma$ fixed, in an appropriate region.

Given $z\in \Gamma$ let $\Delta_z$ be a curve connecting $z$ and
$\infty$ in the unbounded component of $\mathbb{C}\setminus \Gamma.$
Such curves exist and indeed we will construct a special one in
Section 4 (under the additional assumption of asymptotic
quasiconformality). Hence $\mathbb{C}\setminus \Delta_z$ is a simply
connected domain containing $\Gamma \setminus \{z\}$ and so there
exists in $\mathbb{C}\setminus \Delta_z$ a branch of $\log(w-z).$ In
particular, if $z=\gamma(x)$ for some $x \in \mathbb{R}$, the
expressions $\log (\gamma(x+\epsilon)-\gamma(x))$ and $\log
(\gamma(x-\epsilon)-\gamma(x))$ make sense for $ 0 < \epsilon < T. $

\begin{lemm}\label{lemm_curvature}
Let $\Gamma$ be a chord-arc curve and $\gamma$ a bilipschitz
parametrization of $\Gamma$. Let $z \in \Gamma$ and let $x$ be
 a real number such that $\gamma(x)=z$. Then
    for almost every $w\in\Gamma\backslash\Gamma_{z,M\epsilon}$ we have
    \begin{equation}
    T(K_{z,\epsilon})(w)=\frac{1}{\pi^2 (z-w)}\big[ F(x,\epsilon)+G_{z,\epsilon}(w)\big],
    \end{equation}
    where
    \begin{equation}
    \label{BBB}
    F(x,\epsilon)= \log(\gamma(x+\epsilon)-\gamma(x)) - \log(\gamma(x-\epsilon)-\gamma(x)) + \pi i
%   =\log\Big(\frac{|\gamma(x+\epsilon)-\gamma(x)|}{|\gamma(x-\epsilon)-\gamma(x)|}\Big)+i\big(\pi+\arg[\gamma(x+\epsilon)-\gamma(x)]-\arg[\gamma(x-\epsilon)-\gamma(x)]\big)
    \end{equation}
    and
    \begin{equation}\label{decG}
    | G_{z,\epsilon}(w)|\leq
    \frac{C\,\epsilon}{|z-w|}.
    \end{equation}
\end{lemm}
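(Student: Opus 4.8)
The plan is to compute $T(K_{z,\epsilon})(w)$ directly from the definition of the Cauchy transform on $\Gamma$, for $w$ lying outside the ball $\Gamma_{z,M\epsilon}$. Writing $g_{z,\epsilon}=T(K_{z,\epsilon})$, by definition
\[
g_{z,\epsilon}(w)=\frac{1}{\pi i}\,\mathrm{p.v.}\int_\Gamma \frac{K_{z,\epsilon}(\zeta)}{\zeta-w}\,d\zeta
=\frac{1}{(\pi i)^2}\,\mathrm{p.v.}\int_{\Gamma\setminus\Gamma_{z,\epsilon}}\frac{1}{(\zeta-z)(\zeta-w)}\,d\zeta .
\]
The key algebraic step is the partial fraction decomposition
\[
\frac{1}{(\zeta-z)(\zeta-w)}=\frac{1}{z-w}\left(\frac{1}{\zeta-z}-\frac{1}{\zeta-w}\right),
\]
which turns the integral into a difference of two elementary Cauchy-type integrals over $\Gamma\setminus\Gamma_{z,\epsilon}$, each of which can be evaluated by taking a primitive of the integrand, namely a branch of $\log(\zeta-z)$ and a branch of $\log(\zeta-w)$. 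The logarithm of $\zeta-z$ makes sense on $\mathbb{C}\setminus\Delta_z$ as discussed before the lemma; similarly for $\zeta-w$ one uses a slit $\Delta_w$. Since $\Gamma\setminus\Gamma_{z,\epsilon}$ is, up to the point $z$ which we approach from the $z$-centered truncation, essentially an arc, the fundamental theorem of calculus along $\Gamma$ gives boundary terms at the two endpoints $\gamma(x+\epsilon)$ and $\gamma(x-\epsilon)$ (the truncation is at level $\epsilon$ around $z$), plus — and this is where the constant $\pi i$ in \eqref{BBB} comes from — a contribution of $2\pi i$ coming from the fact that $\Gamma$ is a \emph{closed} curve winding once around $z$, so that the branch of $\log(\zeta-z)$ jumps by $2\pi i$ as $\zeta$ traverses the full curve. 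Collecting the $\log(\zeta-z)$ boundary terms produces exactly $F(x,\epsilon)$ after dividing by $(\pi i)^2=-\pi^2$ and by $z-w$; the $\log(\zeta-w)$ boundary terms, together with the care needed near $w$, will produce the error term $G_{z,\epsilon}(w)$.

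Next I would estimate $G_{z,\epsilon}$. After the cancellation above, $G_{z,\epsilon}(w)$ is (a constant times) the difference $\log(\gamma(x+\epsilon)-w)-\log(\gamma(x-\epsilon)-w)$, i.e.\ the variation of $\log(\zeta-w)$ between the two truncation endpoints, which are both at distance $\epsilon$ (up to the bilipschitz constant $L$) from $z$. Since $|z-w|\geq M\epsilon$ by hypothesis, both endpoints lie in the disc $D(z,L\epsilon)$, which is well separated from $w$; hence $\log(\zeta-w)$ is smooth and has gradient of size $\lesssim 1/|z-w|$ on the segment joining them, so its variation is at most $C\,\frac{\epsilon}{|z-w|}$, which is exactly \eqref{decG}. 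The chord-arc condition \eqref{bilipschitz} is what guarantees that $\Gamma_{z,\epsilon}$ has diameter comparable to $\epsilon$, so that "the endpoints are at distance $\lesssim\epsilon$ from $z$'' is legitimate.

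The main obstacle is bookkeeping the branches of the logarithms and the associated winding/jump terms rigorously, so that the precise constant $\pi i$ in \eqref{BBB} emerges and no spurious multiples of $2\pi i$ are left over; in particular one must justify that the principal-value integral defining $T(K_{z,\epsilon})$ agrees with the naive antiderivative computation for \emph{almost every} $w$ (the p.v.\ is needed only on a set of measure zero, which is why the statement is "a.e.''), and one must be careful that the slit $\Delta_z$ can be chosen disjoint from the relevant portion of $\Gamma$. A secondary technical point is that $\Gamma\setminus\Gamma_{z,\epsilon}$ is not literally a single arc with the endpoint $z$ removed but a closed curve with a small sub-arc deleted; parametrizing it by $\gamma$ on $\{\tau:\epsilon\le|\tau-x|\le T/2\}$ (using $T$-periodicity) and integrating $d\zeta=\gamma'(\tau)\,d\tau$ handles this, the two endpoints of integration being $\tau=x\pm\epsilon$. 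Once these points are dealt with, the identity and the bound \eqref{decG} follow from the computation sketched above, and this is essentially the analogue for chord-arc curves of the corresponding step in \cite{G}.
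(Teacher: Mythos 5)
Your overall skeleton---writing $T(K_{z,\epsilon})(w)=\frac{1}{(\pi i)^2}\,\mathrm{p.v.}\int_{\Gamma\setminus\Gamma_{z,\epsilon}}\frac{d\zeta}{(\zeta-z)(\zeta-w)}$, splitting by partial fractions, evaluating by logarithmic primitives at the truncation endpoints, and identifying $G_{z,\epsilon}(w)$ as a difference of two values of $\log(\cdot-w)$---is indeed the paper's argument. But the step you yourself single out as the main obstacle, the origin of the constant $\pi i$ in $F(x,\epsilon)$, is resolved incorrectly. There is no ``$2\pi i$ jump of $\log(\zeta-z)$ because $\Gamma$ winds once around $z$'': the point $z$ lies \emph{on} $\Gamma$, the integration is over the arc $\Gamma\setminus\Gamma_{z,\epsilon}$ rather than the closed curve, and that arc is entirely contained in the slit plane $\mathbb{C}\setminus\Delta_z$, where the chosen branch of $\log(\zeta-z)$ is single-valued and continuous; the fundamental theorem of calculus therefore produces only the two endpoint terms $\log(\gamma(x\pm\epsilon)-z)$, with no winding correction at all. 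The $\pi i$ actually comes from the \emph{other} factor, namely from the principal value at the singularity $\zeta=w$: writing the p.v.\ as a limit over $\Gamma\setminus(\Gamma_{w,\delta}\cup\Gamma_{z,\epsilon})$ one picks up the terms $\log(\gamma(y-\delta)-w)-\log(\gamma(y+\delta)-w)$, and at every $w=\gamma(y)$ where $\gamma$ is differentiable with $\gamma'(y)\neq 0$ (this is exactly where ``almost every $w$'' enters---not, as you write, because the p.v.\ is only needed on a null set) the two points approach $w$ from opposite tangential directions, so this difference tends to $\pm\pi i$; the choice of the cut $\Delta_w$ in the unbounded component of $\mathbb{C}\setminus\Gamma$, i.e.\ to the right of the oriented curve, fixes the value $+\pi i$. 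Without this piece your computation yields $F$ without its $\pi i$ (or with an unjustified $2\pi i$), which is fatal, since the whole point of the lemma is the exact expression for $F$.

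A secondary, repairable issue concerns \eqref{decG}: you estimate the variation of $\log(\zeta-w)$ along the straight segment joining $\gamma(x-\epsilon)$ to $\gamma(x+\epsilon)$, but nothing guarantees that this segment avoids the cut $\Delta_w$, so the gradient bound along it does not directly control the difference of the two branch values. The paper integrates along the curve instead, $G_{z,\epsilon}(w)=\int_{-\epsilon}^{\epsilon}\frac{\gamma'(x+t)}{\gamma(x+t)-w}\,dt$, which stays in $\mathbb{C}\setminus\Delta_w$ automatically because $\Delta_w$ meets $\Gamma$ only at $w$; choosing $M\geq 2L^2$ gives $|\gamma(x+t)-w|\geq |w-z|/2$ (note also that the hypothesis gives $|w-z|\geq M\epsilon/L$, not $M\epsilon$), and the bound $C\epsilon/|w-z|$ follows.
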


\begin{proof}
Take $w\in\Gamma\backslash\Gamma_{z,M\epsilon}$ . Then

\begin{align}
T(K_{z,\epsilon})(w) & = -\frac{1}{\pi^2} \underset{\delta \rightarrow 0}{\lim}\int_{\Gamma\setminus(\Gamma_{w,\delta }\cup \Gamma_{z,\epsilon})}\frac{1}{(\zeta-z)(\zeta-w)}\,d\zeta \\
& = -\frac{1}{\pi^2}  \frac{1}{w-z} \,\underset{\delta \rightarrow 0}{\lim} \int_{\Gamma\setminus(\Gamma_{w,\delta }\cup \Gamma_{z,\epsilon}) }\left( \frac{1}{\zeta-w} - \frac{1}{\zeta-z} \right)\,d\zeta.
\end{align}
Let $y \in \mathbb{R} $ with $\gamma(y) = w$ . Then the latest integral in the above formula is
\begin{align}
& \log\left(\gamma(y-\delta) - \gamma(y) \right) - \log\left(\gamma(x+\epsilon) - \gamma(y) \right)
 +\log\left(\gamma(x-\epsilon) - \gamma(y) \right) \\
 & - \log\left(\gamma(y+\delta) - \gamma(y) \right)
 -\Big( \log\left(\gamma(y-\delta) - \gamma(x) \right) \\
&- \log\left(\gamma(x+\epsilon) - \gamma(x)\right)
+\log\left(\gamma(x-\epsilon) - \gamma(x)\right)-  \log\left(\gamma(y+\delta) - \gamma(x) \right)     \Big).
\end{align}
Assume that $\gamma$ is differentiable at the point $y$ and the derivative $\gamma'(y)$
does not vanish. Then we have that
\begin{equation}
\lim_{\delta \rightarrow 0} \Big(\log\left(\gamma(y-\delta) - \gamma(y) \right)- \log\left(\gamma(y+\delta) - \gamma(y) \right)\Big) = \pi i,
 \end{equation}
 because the curve $\Delta_w$ lies in the unbounded component of $\mathbb{C} \setminus \Gamma$, and then to the right hand side of $\Gamma$,
 oriented according to the parametrization $\gamma$.
 Taking limit as $\delta$ goes to $0$ we obtain
 \begin{align}
T(K_{z,\epsilon})(w) & = -\frac{1}{\pi^2}  \frac{1}{w-z} \Big( \big(\log(\gamma(x+\epsilon) - \gamma(x)) - \log(\gamma(x-\epsilon) - \gamma(x))+ \pi i \big) \\
& + \big(\log(\gamma(x+\epsilon) - \gamma(y)) - \log(\gamma(x-\epsilon) - \gamma(y))\big)\Big).
\end{align}
Define
\begin{equation}\label{exprG}
G_{z,\epsilon}(w)= \log\left(\gamma(x+\epsilon) - \gamma(y)\right) - \log\left(\gamma(x-\epsilon) - \gamma(y)\right).
\end{equation}
\noindent It remains to show the decay inequality \eqref{decG}. According to the choice of $\Delta_w$ we have a well defined branch of
$\log(\gamma(x+t)-w), \;  -\epsilon < t < \epsilon.$  Thus
\begin{equation}\label{intG}
G_{z,\epsilon}(w) = \int_{-\epsilon}^{\epsilon} \frac{d}{dt} \log(\gamma(x+t)-w) \,dt =
 \int_{-\epsilon}^{\epsilon} \frac{\gamma'(x+t)}{\gamma(x+t)-w}\, dt.
\end{equation}
Since $w=\gamma(y) \in \Gamma \setminus \Gamma_{z, M \epsilon}$, we have $ y \notin (x-M \epsilon, x+M \epsilon)$ and so
\begin{equation}\label{sota}
|w-z| = |\gamma(y) -\gamma(x)| \geq  \frac{|y-x|}{L} \geq \frac{M \epsilon}{L} ,
\end{equation}
which gives , taking $M \geq 2 L^2$,
\begin{align}
|w-\gamma(x+t)|  & \geq  |w-z| - |\gamma(x) -\gamma(x+t)| \\ & \geq \frac{ |w-z| }{2}  + \frac{M \epsilon}{2 L} - L \epsilon \\ & \geq \frac{ |w-z| }{2}.
\end{align}
Hence, by \eqref{intG},
\begin{align}
|G_{z,\epsilon}(w) | & \leq  \int_{-\epsilon}^{\epsilon} \frac{|\gamma'(x+t)|}{|\gamma(x+t)-w|}\, dt \leq \frac{4L \epsilon}{|w-z|}.\qedhere
\end{align}
\end{proof}

\begin{lemm}\label{Flog}
Let $\Gamma$ be a chord-arc curve and $\gamma$ a bilipschitz
parametrization of $\Gamma$.  Then the inequality
\begin{equation}\label{m22}
T_*(f)(z) \leq  C\, M^2(Tf)(z), \quad z\in \Gamma, \quad f \in
L^2(\Gamma,\mu),
\end{equation}
is equivalent to
\begin{equation}\label{Ffitada}
| F(x,\epsilon)| |\log(\epsilon)| \leq C\, \quad 0< \epsilon <T, \quad x \in \mathbb{R}.
\end{equation}
\end{lemm}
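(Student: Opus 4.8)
The plan is to reduce, via the identity \eqref{identitat} and the estimates \eqref{stima1}--\eqref{stima2}, the inequality \eqref{m22} to a bound on the single term $III_\epsilon=\int_{\Gamma\setminus\Gamma_{z,M\epsilon}}Tf(w)\,g_{z,\epsilon}(w)\,dw$, and then to use Lemma~\ref{lemm_curvature} to isolate the factor $F(x,\epsilon)$. Concretely, substituting $g_{z,\epsilon}(w)=\frac{1}{\pi^{2}(z-w)}\big(F(x,\epsilon)+G_{z,\epsilon}(w)\big)$ (valid for a.e.\ $w\in\Gamma\setminus\Gamma_{z,M\epsilon}$) I split $III_\epsilon=III'_\epsilon+III''_\epsilon$ with
\[
III'_\epsilon=\frac{F(x,\epsilon)}{\pi^{2}}\int_{\Gamma\setminus\Gamma_{z,M\epsilon}}\frac{Tf(w)}{z-w}\,dw,\qquad III''_\epsilon=\frac{1}{\pi^{2}}\int_{\Gamma\setminus\Gamma_{z,M\epsilon}}\frac{Tf(w)\,G_{z,\epsilon}(w)}{z-w}\,dw .
\]
For $III''_\epsilon$ I use \eqref{decG} and a dyadic decomposition of $\Gamma\setminus\Gamma_{z,M\epsilon}$ into annuli $\Gamma_{z,2^{k+1}M\epsilon}\setminus\Gamma_{z,2^{k}M\epsilon}$; since $\mu(\Gamma_{z,r})\approx r$ by \eqref{bilipschitz}, the resulting geometric series converges and gives $|III''_\epsilon|\lesssim M^{-1}M(Tf)(z)\le M^{2}(Tf)(z)$. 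For $III'_\epsilon$, recalling \eqref{Cauchytrunc} one has $\int_{\Gamma\setminus\Gamma_{z,M\epsilon}}\frac{Tf(w)}{z-w}\,dw=-\pi i\,T_{M\epsilon}(Tf)(z)$, so that $III'_\epsilon=-\tfrac{i}{\pi}F(x,\epsilon)\,T_{M\epsilon}(Tf)(z)$.

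From here the implication \eqref{Ffitada}$\Rightarrow$\eqref{m22} is immediate once one notes the elementary bound $|T_{M\epsilon}(Tf)(z)|\lesssim|\log\epsilon|\,M(Tf)(z)$: summing the same dyadic annuli (there are $\approx|\log\epsilon|$ of them between scale $M\epsilon$ and the length of $\Gamma$), and on each the integral of $|Tf|/|z-w|$ is $\lesssim M(Tf)(z)$. Hence under \eqref{Ffitada}, $|III'_\epsilon|\lesssim|F(x,\epsilon)|\,|\log\epsilon|\,M(Tf)(z)\lesssim M^{2}(Tf)(z)$, so, combining with \eqref{stima1}--\eqref{stima2} and the bound for $III''_\epsilon$, $|T_\epsilon f(z)|\lesssim M^{2}(Tf)(z)$ for every small $\epsilon$; the passage to all $\epsilon$, and thus to $T_*f$, is supplied by the reduction of Section~3. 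The point is that the gain $|F(x,\epsilon)|\lesssim|\log\epsilon|^{-1}$ is exactly calibrated to compensate the unavoidable logarithmic loss of a truncated singular integral.

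For the converse I test \eqref{m22} against a suitable function. Fix $x$, set $z=\gamma(x)$, and let $g\in L^\infty(\Gamma)\subset L^{2}(\Gamma,\mu)$ be the $\pm1$-valued step function given in the parametrization by $g(\gamma(x+\tau))=\operatorname{sign}(\tau)$ for $|\tau|<T/2$. Since the Cauchy integral on a closed chord-arc curve (with the normalization in \eqref{Cauchy}) satisfies $T^{2}=\operatorname{Id}$ on $L^{2}(\Gamma,\mu)$ — a consequence of the Plemelj jump relations, legitimate because $T$ is $L^{2}$-bounded on chord-arc curves — the function $f:=Tg$ satisfies $Tf=g$ a.e.; as $|g|\equiv1$ we get $Mg\equiv1$ and therefore $M^{2}(Tf)(z)=M^{2}g(z)=1$. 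On the other hand, writing the integrand as a derivative of a branch of $\log(\gamma(x+\cdot)-z)$ and using \eqref{bilipschitz} to get $|\gamma(x\pm M\epsilon)-z|\approx M\epsilon$, a direct computation shows $|T_{M\epsilon}(Tf)(z)|=|T_{M\epsilon}g(z)|\gtrsim|\log\epsilon|$. Plugging $f$ into $-T_\epsilon f(z)=I_\epsilon+II_\epsilon+III'_\epsilon+III''_\epsilon$ and using the bounds already established for $I_\epsilon,II_\epsilon,III''_\epsilon$ (each $\lesssim M^{2}(Tf)(z)=1$), the reverse triangle inequality yields $|T_\epsilon f(z)|\ge|III'_\epsilon|-C\gtrsim|F(x,\epsilon)|\,|\log\epsilon|-C$; since \eqref{m22} forces $|T_\epsilon f(z)|\le T_*f(z)\le C\,M^{2}(Tf)(z)=C$, we conclude $|F(x,\epsilon)|\,|\log\epsilon|\lesssim1$ for all small $\epsilon$. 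For the remaining range of $\epsilon$ one invokes the symmetry $F(x,\epsilon)+F(x,T-\epsilon)=2\pi i$ (immediate from the $T$-periodicity of $\gamma$) to handle $\epsilon$ near $T$, and an elementary absolute-continuity argument (the density $\gamma'(x\pm\epsilon)/(\gamma(x\pm\epsilon)-\gamma(x))$ is bounded when $\epsilon$ stays away from $0$ and $T$) to handle the intermediate range.

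I expect the converse to be the delicate part: one cannot prescribe $f$ and compute $T_\epsilon f$ explicitly, so instead one prescribes the image $Tf=g$ and inverts $T$ — this is where $T^{2}=\operatorname{Id}$ is essential — and the choice of the $\pm1$ step function is what simultaneously makes $M^{2}(Tf)(z)$ trivially bounded and the truncated transform $T_{M\epsilon}(Tf)(z)$ logarithmically large, so that \eqref{m22} can be read precisely as an upper bound on $|F(x,\epsilon)|\,|\log\epsilon|$.
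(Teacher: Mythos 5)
Your argument is correct, and its sufficiency half is essentially the paper's own proof: your splitting $III_\epsilon=III'_\epsilon+III''_\epsilon$ is exactly the paper's $III_\epsilon=F(x,\epsilon)\,IV_\epsilon+V_\epsilon$, with the same dyadic-annuli bounds (quadratic decay for the $G_{z,\epsilon}$ term, logarithmic loss for the other) and the same appeal to the Section~3 reduction for large truncation levels. The necessity half is where you take a genuinely different route. The paper keeps the inequality $|F(x,\epsilon)\,IV_\epsilon|\le C\,M^2(Tf)(z)$, replaces $Tf$ by an arbitrary $g\in L^2$ using that $T$ is an isomorphism of $L^2(\Gamma,\mu)$ (imported from \cite{G}), and then tests with $g=\chi_{\gamma((\epsilon^n,\epsilon))}$ for an $\epsilon$-dependent exponent $n$, extracting $|\log\epsilon|$ from $\log|\gamma(\epsilon)|-\log|\gamma(\epsilon^n)|$; you instead invert $T$ through $T^2=\mathrm{Id}$ and test with the fixed odd step function $g(\gamma(x+\tau))=\operatorname{sign}(\tau)$, getting the logarithm from the two near-singular endpoints at scale $M\epsilon$ (your real-part computation of $T_{M\epsilon}g(z)$ is right and gives $\gtrsim|\log\epsilon|$ for small $\epsilon$, while $M^2(Tf)(z)=1$ trivially). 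Your variant has a concrete advantage: your test function is supported where the pairing actually lives, i.e.\ outside $\Gamma_{z,M\epsilon}$, whereas in \eqref{Fg} as written the paper's $g$ is supported in $\gamma((\epsilon^n,\epsilon))\subset\Gamma_{z,M\epsilon}$, inside the excluded ball, so that step needs the test function moved beyond scale $M\epsilon$ — which your choice does automatically, and it also avoids the $\epsilon$-dependent exponent $n(\epsilon)$. Two caveats: the identity $T^2=\mathrm{Id}$ on a chord-arc curve is an external input of exactly the same nature as the paper's citation of the isomorphism lemma (it holds via the a.e.\ Plemelj jump relations once $T$ is $L^2$-bounded, but it should be cited, or you should note that plain surjectivity of $T$ on $L^2$ — what the paper imports — is all you actually use); and your treatment of the non-small range of $\epsilon$ (the symmetry $F(x,\epsilon)+F(x,T-\epsilon)=2\pi i$ near $\epsilon=T$ plus boundedness of $\partial_\epsilon F$ on the intermediate range), though compressed, is sound and plays the role of the paper's separate computation with $g=\chi_{\gamma((\epsilon^{-n},\epsilon))}$ for $1\le\epsilon<T$.
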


\begin{proof}
Assume that \eqref{Ffitada} holds. Then by Lemma \ref{lemm_curvature}
\begin{align}
 III_{\epsilon}  & = \int_{\Gamma \setminus \Gamma_{z,M \epsilon}} Tf(w) \, T(K_{z,\epsilon})(w) \,dw \\
& = \frac{F(x,\epsilon)}{\pi^2} \int_{\Gamma \setminus \Gamma_{z,M \epsilon}}  \frac{Tf(w)}{z-w}\,dw + \frac{1}{\pi^2} \int_{\Gamma \setminus \Gamma_{z,M \epsilon}} Tf(w) \, \frac{G_{z,\epsilon}(w) }{z-w}\,dw \\
& =  F(x,\epsilon) \, IV_\epsilon + V_\epsilon,
\end{align}
where the last identity is a definition of the terms $IV_\epsilon $
and  $V_\epsilon$. One can break the domain of integration in the integrals in $IV_\epsilon $
and  $V_\epsilon$ into a union of dyadic annuli
\begin{equation}
A_j = \gamma \big\{y \in \mathbb{R} :   M \epsilon\, 2^j < | y-x| \leq  M \epsilon \, 2^{j+1} \big\}, \quad j=0, 1, ...
\end{equation}
then perform standard estimates and apply \eqref{decG} to get, thanks to the quadratic decay of the integrand,
\begin{equation}\label{quatre}
|  V_\epsilon |  \leq C\, M\big(T(f)\big)(z).
\end{equation}
For $IV_\epsilon $ one only has a first order decay, which gives
\begin{equation}
|  IV_\epsilon | \leq C\,  \Big|\log\Big(\frac{ML}{\epsilon}\Big)\Big|M(Tf)(z),
\end{equation}
thus completing the proof of the sufficient condition.

Assume now  \eqref{m22}.  Recalling that $III_{\epsilon} =F(x,\epsilon) \, IV_\epsilon + V_\epsilon$ and \eqref{quatre},
we obtain
\begin{equation}\label{F4}
\big|  F(x,\epsilon) \, IV_\epsilon \big|  \leq C\, M^2\big(T(f)\big)(z), \quad z \in \Gamma, \quad f \in L^2(\Gamma, \mu).
\end{equation}
The Cauchy Singular Integral operator $T$ is an isomorphism of $L^2(\Gamma, \mu)$ onto itself. This is proved in
Lemma 1 of \cite[p. 661]{G}  for Lipschitz graphs, and the same proof works in our context. Thus \eqref{F4} can be rewritten as
\begin{equation}\label{Fg}
\Big|  F(x,\epsilon) \,  \int_{\Gamma \setminus \Gamma_{z,M \epsilon}}  \frac{g(w)}{z-w}\,dw  \Big|  \leq C\, M^2(g)(z), \quad z \in \Gamma, \quad g \in L^2(\Gamma,\mu).
\end{equation}
To simplify the notation take $x=0= \gamma(x).$ Assume first that $0 <
\epsilon <  1.$  Apply \eqref{Fg} with $g$ the characteristic
function of $\gamma((\epsilon^n, \epsilon))$, where $n$ is a large
integer to be chosen. Then
\begin{equation}\label{F0}
|  F(0,\epsilon)|  \Big| \int_{\ep^n}^{\ep} \frac{\gamma'(t)}{\gamma(t)}
\,dt \Big| \leq C
\end{equation}
and
\begin{align}
 \Big| \int_{\ep^n}^{\ep} \frac{\gamma'(t)}{\gamma(t)} \,dt \Big| & =  | \log(\gamma(\ep)) - \log(\gamma(\ep^n) )| \\
 & \geq  | \log(|\gamma(\ep)|) - \log(|\gamma(\ep^n) |)|\\
 & \geq \log\Big(\frac{1}{L^2\,\ep^{n-1}}\Big)\\
 & \geq -2 \log(L) + (n-2) \log\Big(\frac{1}{\ep}\Big) +
 \log\Big(\frac{1}{\ep}\Big)\\
 & \geq |\log(\ep)|
\end{align}
provided $n=n(\ep)$ is large enough so that  $-2 \log(L) + (n-2)
\log({1}/\ep) \geq 0$. Therefore  \eqref{Ffitada} follows in
this case.

If $1 \leq \ep < T$ then we take as $g$ the characteristic function
of $\gamma((\ep^{-n}, \ep)).$ In this case we get
\begin{align}
 \Big| \int_{\ep^{-n}}^{\ep} \frac{\gamma'(t)}{\gamma(t)} \,dt \Big| & \geq  -2 \log(L) +
 n \log(\ep)+ \log(\ep)\\ &
 \geq |\log(\ep)|
\end{align}
provided $n$ is chosen so that $-2 \log(L) + n \log(\ep) \geq 0$.
\end{proof}

\section{Reduction to estimating truncations at small levels}
In this section we reduce the proof of \eqref{m2} to estimating the
truncations $T_{\ep} f$ for small $\ep.$ In the previous section we
showed that the estimate of $T_{\ep} f$ can be reduced to that of
the term $III_{\ep}$ in \eqref{123_3}.

\begin{lemm}\label{smalltrunc} If $\epsilon_0$ is a given positive number, then there exists a large positive number $M=M(L)$
so that
\begin{equation}
\Big|\int_{\Gamma\setminus \Gamma_{z,M
\epsilon}}Tf(w)\,g_{z,\epsilon}(w)dw \Big| \leq C\, M(Tf)(z),\quad z
\in \Gamma, \quad \epsilon_0 <\epsilon,
\end{equation}
for a positive constant $C=C(\ep_0,L).$
\end{lemm}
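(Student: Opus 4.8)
The plan is to reuse the decomposition of $III_\epsilon$ obtained in Section~2 and to notice that, once $\epsilon$ is bounded below by $\epsilon_0$, the two delicate features of the general estimate --- the smallness of $\epsilon$ and the logarithm produced by the term $IV_\epsilon$ --- disappear, while the geometric factor $F(x,\epsilon)$ can be dispatched by a crude length-over-distance bound. Fix once and for all $M=M(L)\ge 2L^2$, large enough that Lemma~\ref{lemm_curvature} applies and that \eqref{stima1}, \eqref{stima2} and \eqref{quatre} hold. Writing $z=\gamma(x)$, recall from the proof of Lemma~\ref{Flog} the identity
\[
III_\epsilon=\int_{\Gamma\setminus\Gamma_{z,M\epsilon}}Tf(w)\,g_{z,\epsilon}(w)\,dw=F(x,\epsilon)\,IV_\epsilon+V_\epsilon,
\]
together with the bounds $|V_\epsilon|\le C\,M(Tf)(z)$ (this is \eqref{quatre}) and $|IV_\epsilon|\le C\,\bigl|\log(ML/\epsilon)\bigr|\,M(Tf)(z)$. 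As one sees from that proof, both come out of \eqref{decG} and the dyadic-annuli comparison of $Tf$ with $M(Tf)(z)$, without invoking \eqref{Ffitada}; hence both are at our disposal here for every $\epsilon$ with $M\epsilon<T/2$.

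Next I would split on the size of $M\epsilon$. If $M\epsilon\ge T/2$, the set $\{\tau:|\tau-t|<M\epsilon\}$ with $z=\gamma(t)$ is an interval of length $\ge T$, so $\Gamma_{z,M\epsilon}$ equals $\Gamma$ up to at most one point, $\Gamma\setminus\Gamma_{z,M\epsilon}$ is $\mu$-null, and $III_\epsilon=0$; the asserted inequality is then trivial. So it suffices to treat $\epsilon_0<\epsilon<T/(2M)$ --- a range which is empty, with nothing to prove, unless $\epsilon_0<T/(2M)$. For such $\epsilon$ the factor $\bigl|\log(ML/\epsilon)\bigr|$ is at most $\max\bigl(|\log(ML/\epsilon_0)|,\,|\log(2M^2L/T)|\bigr)$, a constant depending only on $\epsilon_0$, $L$ and the fixed period $T$; thus $|IV_\epsilon|\le C(\epsilon_0,L)\,M(Tf)(z)$, while $|V_\epsilon|\le C\,M(Tf)(z)$ as recalled above.

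The remaining --- and only genuinely new --- step is to bound $|F(x,\epsilon)|$ for $\epsilon_0<\epsilon<T/2$. Choosing the auxiliary curve $\Delta_z$ so that $\Delta_z\setminus\{z\}$ lies in the open unbounded component of $\mathbb{C}\setminus\Gamma$, the sub-arc $\sigma$ of $\Gamma$ joining $\gamma(x-\epsilon)$ to $\gamma(x+\epsilon)$ and not passing through $z$ is contained in the simply connected domain $\mathbb{C}\setminus\Delta_z$, so path-independence of $\int\frac{d\zeta}{\zeta-z}$ gives
\[
F(x,\epsilon)=\pi i+\int_\sigma\frac{d\zeta}{\zeta-z}.
\]
Since $\gamma$ is locally $L$-Lipschitz, $\sigma$ has length at most $L(T-2\epsilon)\le LT$; and by \eqref{bilipschitz} together with the $T$-periodicity of $\gamma$, every point of $\sigma$ lies at distance $\ge\epsilon/L\ge\epsilon_0/L$ from $z$. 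Hence $|F(x,\epsilon)|\le\pi+L^2T/\epsilon_0=:C(\epsilon_0,L)$, and combining the three estimates,
\[
|III_\epsilon|\le|F(x,\epsilon)|\,|IV_\epsilon|+|V_\epsilon|\le C(\epsilon_0,L)\,M(Tf)(z),
\]
which is the claim.

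The main obstacle is precisely the control of $F(x,\epsilon)$: unlike in Lemma~\ref{Flog}, no smoothness or asymptotic conformality of $\Gamma$ is available here and $F$ need not be small, so the crude length-over-distance bound is essentially the only tool --- and it works only because $\epsilon$ is kept away from $0$ and (effectively) from $T/2$. A minor point to check along the way is that the long-arc representation of $F(x,\epsilon)$ does not depend on the choice of $\Delta_z$; this, however, belongs to the standing conventions of Section~2 and is not peculiar to this lemma.
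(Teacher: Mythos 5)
Your proposal is correct, but it follows a genuinely different route from the paper. The paper's proof of this lemma does not touch the $F(x,\epsilon)\,IV_\epsilon+V_\epsilon$ decomposition at all: it splits $g_{z,\epsilon}(w)=h(w)+k(w)$ directly, bounds $|k(w)|$ by a length-over-distance estimate, evaluates $\operatorname{p.v.}\int_\Gamma\frac{d\zeta}{\zeta-w}=\pi i$ by Cauchy's theorem at points where $\Gamma$ has a tangent, and so obtains the uniform pointwise bound $|g_{z,\epsilon}(w)|\leq C(\ep_0,M,L,\length(\Gamma))$ on $\Gamma\setminus\Gamma_{z,M\epsilon}$, after which the lemma follows from $\int_\Gamma|Tf|\,d\mu\leq\length(\Gamma)\,M(Tf)(z)$. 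You instead recycle the identity $III_\epsilon=F(x,\epsilon)\,IV_\epsilon+V_\epsilon$ and the dyadic-annuli bounds for $IV_\epsilon$ and $V_\epsilon$ — and you are right that these were established in the proof of Lemma \ref{Flog} without using \eqref{Ffitada}, only Lemma \ref{lemm_curvature} and \eqref{decG}, so they are legitimately available — and then dispose of $F(x,\epsilon)$ by the long-arc formula $F(x,\epsilon)=\pi i+\int_\sigma\frac{d\zeta}{\zeta-z}$, valid because $\sigma\subset\Gamma\setminus\{z\}\subset\mathbb{C}\setminus\Delta_z$, giving $|F(x,\epsilon)|\leq\pi+L^2T/\ep_0$. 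At bottom the two arguments use the same mechanism (your $\int_\sigma\frac{d\zeta}{\zeta-z}$ is essentially the paper's term $k(w)$ stripped of the factor $\frac{1}{w-z}$), but the bookkeeping differs: the paper's version is self-contained, needs no branch-of-logarithm machinery, and covers all $\epsilon>\ep_0$ in one stroke, at the price of invoking Cauchy's theorem and a.e.\ existence of tangents; yours is shorter given Lemmas \ref{lemm_curvature} and \ref{Flog}, and your explicit dismissal of the range $M\epsilon\geq T/2$ (where $\Gamma\setminus\Gamma_{z,M\epsilon}$ is $\mu$-null) is a detail the paper leaves implicit. In both arguments the constant inevitably depends on $\length(\Gamma)$ (through $T$ in your case), which matches the dependence the paper's own proof produces even though the statement advertises $C=C(\ep_0,L)$, so this is not a defect of your argument.
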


The small number $\ep_0$ will be chosen in the next section.

\begin{proof} Recall that
\begin{align}
g_{z,\ep}(w) &= T(K_{z,\epsilon})(w) \\& =
-\frac{1}{\pi^2} \operatorname{p.v.} \int_{\Gamma\setminus \Gamma_{z,\epsilon}} \frac{1}{(\zeta-w)(\zeta-z)}\,d\zeta \\
& = -\frac{1}{\pi^2}  \frac{1}{w-z} \, \operatorname{p.v.}
\int_{\Gamma\setminus \Gamma_{z,\epsilon}} \left( \frac{1}{\zeta-w}
- \frac{1}{\zeta-z} \right)\,d\zeta \\
& = -\frac{1}{\pi^2}  \frac{1}{w-z} \,\operatorname{p.v.}
\int_{\Gamma\setminus \Gamma_{z,\epsilon}} \frac{1}{\zeta-w}\,d\zeta
+ \frac{1}{\pi^2}  \frac{1}{w-z} \,\operatorname{p.v.}
\int_{\Gamma\setminus \Gamma_{z,\epsilon}} \frac{1}{\zeta-z} \,d\zeta \\
& = h(w)+k(w),
\end{align}
where in the last identity we defined $h(w)$ and $k(w)$.

Applying the bilipschitz character of  $\gamma$ we conclude that
\begin{equation}\label{ka}
|k(w)| \leq \frac{1}{\pi^2} \frac{L^2}{M \ep_0^2}
\operatorname{length}(\Gamma),\quad w  \in \Gamma\setminus
\Gamma_{z,M \epsilon}, \quad \ep_0 < \ep.
\end{equation}

The estimate of $h(w)$ is a little trickier. We have
\begin{equation}
h(w)= -\frac{1}{\pi^2}  \frac{1}{w-z} \,\operatorname{p.v.}
\int_{\Gamma} \frac{1}{\zeta-w}\,d\zeta + \frac{1}{\pi^2}
\frac{1}{w-z} \,\operatorname{p.v.} \int_{ \Gamma_{z,\epsilon}}
\frac{1}{\zeta-w}\,d\zeta
\end{equation}
\noindent A simple application of Cauchy's Theorem gives that, if
$\Gamma$ has a tangent at $w$,
\begin{equation}
\operatorname{p.v.} \int_{\Gamma} \frac{1}{\zeta-w}\,d\zeta = \pi i.
\end{equation}
As before, the bilipschitz character of $\gamma$ yields
\begin{equation}
|w-z|\geq \frac{M\ep}{L}, \quad w \in  \Gamma\setminus \Gamma_{z,M
\epsilon}
\end{equation}
and
$$
|w-\zeta| \geq |w-z|-|z-\zeta| \geq \ep \Big(\frac{M}{L}-L\Big), \quad w \in
\Gamma\setminus \Gamma_{z,M \epsilon} \quad \zeta \in
\Gamma_{z,\epsilon}
$$
Choose $M$ so that $M/L-L \geq 1.$ Then
$$
|w-\zeta| \geq \ep,  \quad w \in \Gamma\setminus \Gamma_{z,M
\epsilon} \quad \zeta \in \Gamma_{z,\epsilon}.
$$
Gathering all the previous estimates we finally get
\begin{equation}\label{hac}
|h(w)| \leq \frac{1}{\pi}\frac{L}{M \ep_0}+ \frac{1}{\pi^2}
\frac{\operatorname{length(\Gamma)}}{\ep_0}, \quad w \in
\Gamma\setminus \Gamma_{z,M \epsilon}, \quad \ep_0 < \ep.
\end{equation}
Hence \eqref{ka} and \eqref{hac} yield
\begin{equation}
|g_{z,\ep}(w)| \leq C, \quad w \in \Gamma\setminus \Gamma_{z,M
\epsilon}, \quad \ep_0 < \ep,
\end{equation}
where $C=C(\ep_0, M, L, \operatorname{length}(\Gamma))$ is a
constant depending on $\ep_0, M, L$ and
$\operatorname{length}(\Gamma)$.

Therefore
$$
\Big|\int_{\Gamma\setminus \Gamma_{z,M
\epsilon}}Tf(w)\,g_{z,\epsilon}(w)dw \Big| \leq C\, \int_{\Gamma}
|Tf(w)| \,d\mu(w) \leq C \operatorname{length}(\Gamma) M(Tf)(z),
$$
which completes the proof of the lemma.
\end{proof}

\section{The proof of the Theoem}
For $z \neq 0$ let $\operatorname{Arg}(z)$ denote the principal
argument of $z$, so that $ 0\leq \operatorname{Arg}(z) < 2 \pi.$
\begin{lemm}\label{ml}
Given $\alpha > 0$ there exists a positive number $\ep_0=\ep_0 (L)$
with the following property. Assume that $0 < \ep_1 \leq \ep_0$,
$\ep_1/2 < \ep \leq \ep_1$ and that for a fixed $x \in \mathbb{R}$ we
have $\gamma(x)=0$. If $\gamma(x-\tau), \; \tau > 0,$ satisfies
\begin{equation}\label{sec}
\frac{\ep_1}{2 L} < |\gamma(x-\tau)|< L \ep_1,
\end{equation}
then, for some $\theta$ such that
$\gamma(x-\tau)=|\gamma(x-\tau)| e^{i \theta}$, we have
\begin{equation}\label{arg}
\big|\theta-\big(\operatorname{Arg}(\gamma(x+\ep))+\pi\big)\big| < \alpha.
\end{equation}
\end{lemm}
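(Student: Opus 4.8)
The plan is to derive \eqref{arg} directly from the definition of asymptotic conformality \eqref{ac}, whose geometric content is that at small scales the curve is almost straight: the three points $\gamma(x-\tau)$, $\gamma(x)=0$ and $\gamma(x+\ep)$ are almost collinear with $0$ lying between the other two, so the rays from the origin through $\gamma(x-\tau)$ and through $\gamma(x+\ep)$ point in nearly opposite directions, which is \eqref{arg}. I would fix the constants in this order: given $\alpha$, choose a small $\delta=\delta(\alpha,L)$ (pinned down by the elementary estimate at the end of the argument), let $\eta=\eta(\delta)>0$ be the number furnished by \eqref{ac} for this $\delta$, and then take $\ep_0$ small enough — depending on $L$, on $\eta$, and on the period $T$ — for the reductions below. (So, strictly speaking, $\ep_0$ also depends on $\alpha$.)

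First I would turn \eqref{sec} into quantitative control. Restricting to $\tau$ with $x-\tau$ and $x$ within half a period — the only regime occurring in the application, and the only one in which \eqref{arg} can hold — the bilipschitz inequality \eqref{bilipschitz} gives $\tau/L\le|\gamma(x-\tau)|\le L\tau$, so \eqref{sec} forces $\ep_1/(2L^2)<\tau<L^2\ep_1$; together with $\ep_1/2<\ep\le\ep_1$ and \eqref{bilipschitz} this shows that $a:=|\gamma(x-\tau)|$ and $b:=|\gamma(x+\ep)|$ both lie in $[\ep_1/(2L),L\ep_1]$, a fact I would record as $a^2+b^2\le 4L^2ab$. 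Consider the arc $A:=\gamma([x-\tau,x+\ep])$; it contains $0=\gamma(x)$ in its interior and, having parameter length $\tau+\ep<(L^2+1)\ep_1$, has diameter at most $L(L^2+1)\ep_1$. The complementary subarc has parameter length at least $T/2$ and hence, by \eqref{bilipschitz}, diameter at least $T/(4L)$; so, choosing $\ep_0$ with $L(L^2+1)\ep_0<\min\{\eta,\,T/(4L)\}$, the arc $A$ is the one of smallest diameter joining $\gamma(x-\tau)$ and $\gamma(x+\ep)$, and $|\gamma(x-\tau)-\gamma(x+\ep)|<\eta$. Thus \eqref{ac} applies with $z_1=\gamma(x-\tau)$, $z_2=\gamma(x+\ep)$ and $z=0$, giving $a+b\le(1+\delta)\,|z_1-z_2|$.

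The final step is plane geometry. Let $\phi\in[0,\pi]$ be the angle between the vectors $z_1$ and $z_2$ issuing from the origin, i.e. the unsigned difference of their arguments. The law of cosines $|z_1-z_2|^2=a^2+b^2-2ab\cos\phi$ turns $a+b\le(1+\delta)|z_1-z_2|$ into
$$2(1+\delta)^2ab\cos\phi\le\big((1+\delta)^2-1\big)(a^2+b^2)-2ab ;$$
inserting $a^2+b^2\le4L^2ab$ and dividing by $2(1+\delta)^2ab>0$ yields
$$\cos\phi\le\frac{4L^2\big((1+\delta)^2-1\big)-2}{2(1+\delta)^2}=:-1+\rho(\delta,L),$$
with $\rho(\delta,L)\to0$ as $\delta\to0$. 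I would then fix $\delta=\delta(\alpha,L)$ so small that $\rho(\delta,L)<1-\cos\alpha$; this forces $\cos\phi<-\cos\alpha=\cos(\pi-\alpha)$ and hence $\pi-\phi<\alpha$. Since the direction of $-z_2$ makes angle $\pi-\phi$ with that of $z_1$, I would choose $\theta$ in the class $\operatorname{Arg}(\gamma(x-\tau))+2\pi\mathbb Z$ closest to $\operatorname{Arg}(\gamma(x+\ep))+\pi$; then $\gamma(x-\tau)=|\gamma(x-\tau)|e^{i\theta}$ and $|\theta-(\operatorname{Arg}(\gamma(x+\ep))+\pi)|=\pi-\phi<\alpha$, which is \eqref{arg}.

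The step I expect to be delicate is the geometric setup in the second paragraph, specifically the verification that $0=\gamma(x)$ lies on the \emph{smallest-diameter} arc joining $\gamma(x-\tau)$ and $\gamma(x+\ep)$, so that \eqref{ac} may legitimately be applied with $z=0$. This is exactly where smallness of $\ep_0$ relative to $T$ matters, and why $\tau$ must be kept from wrapping around past half a period: if $\gamma(x-\tau)$ were reached by travelling almost a full period backwards — effectively forwards from $\gamma(x)$ — the three points would still be nearly collinear, but with $0$ at an \emph{endpoint}, and the conclusion would instead read $\theta\approx\operatorname{Arg}(\gamma(x+\ep))$ with no extra $\pi$. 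Everything else — the passage through the law of cosines and the bookkeeping with arguments modulo $2\pi$ — is elementary.
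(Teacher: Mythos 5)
Your argument is correct and is essentially the paper's own proof: both apply the asymptotic conformality inequality with $z_1=\gamma(x-\tau)$, $z_2=\gamma(x+\ep)$, $z=0$, use the bilipschitz bounds $\ep_1/(2L^2)\lesssim\tau\lesssim L^2\ep_1$ to control the side lengths, and conclude via the law of cosines that $1+\cos\phi$ is small, hence the angle at the origin is within $\alpha$ of $\pi$. The only differences are your (welcome) extra care in checking that $0$ lies on the smallest-diameter arc so that \eqref{ac} applies, the explicit mod-$2\pi$ bookkeeping for $\theta$, and the accurate remark that $\ep_0$ in fact also depends on $\alpha$ through $\delta$ and $\eta$.
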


\begin{proof}
Consider the triangle with vertices $0, \gamma(x-\tau)$ and
$\gamma(x+\ep)$ and side lengths $A= |\gamma(x-\tau)|$,
$B=|\gamma(x+\ep)|$ and $C= |\gamma(x+\ep)-\gamma(x-\tau)|$. By the
cosine Theorem
$$
C^2 = A^2+B^2 -2 AB \cos(\phi),
$$
where $\phi$ is the angle opposite to the side $C.$  In other
terms
$$
1+ \cos(\phi) = \frac{(A+B-C)(A+B+C)}{2AB}.
$$
By asymptotic quasiconformality, given $\delta>0$ there exists $\eta_0
> 0$ such that \\ $C = |\gamma(x+\ep)-\gamma(x-\tau)| <\eta_0$ implies $A+B \leq (1+\delta)C.$
The bilipschitz property of $\gamma$ \eqref{bilipschitz} yields
$\ep_1/2L^2 \leq \tau \leq L^2 \ep_1.$ Hence
\begin{equation}
1+ \cos(\phi)\leq \delta L^4 \frac{(\ep_1+\tau)^2}{\ep_1 \tau} \leq
2 \delta  L^6 (1+L^2)^2.
\end{equation}
Taking $\theta = \operatorname{Arg}(\gamma(x+\ep)) +\phi$ we see that $ |\theta-(\operatorname{Arg}(\gamma(x+\ep))+ \pi )| < \alpha $ 
provided $\delta$ is small enough. Since
$$
|\gamma(x+\ep)-\gamma(x-\tau)| \leq L (\ep + \tau) \leq \ep_0 L
(1+L^2),
$$
one has to choose $\ep_0$ so that $  \ep_0  L (1+L^2)\leq \eta_0,$
which shows the correct dependence of $\ep_0$ and completes the
proof of the Lemma.
\end{proof}
Given a point $z \in \Gamma$ we want now to construct a special Jordan arc $\Delta_z$ connecting $z$ to $\infty$ in the complement
of $\Gamma.$ Assume, without loss of generality, that $z=0$. Take $x \in \mathbb{R}$ with $\gamma(x)=0$. Let $\ep_0$ be the number
given in the preceding Lemma and define, for $j=0,1,2, \dots$, a polar rectangle by
\begin{equation}\label{polarR}
 R_j = \Big\{w = |w|e^{i \theta}: \frac{\ep_0}{2^{j+1} L} < |w| < \frac{\ep_0 L}{2^{j}} \quad \text{and}\quad
 \Big|\theta- {\rm{Arg}}\Big(\gamma\Big(x+\frac{\ep_0}{2^j}\Big)\Big)+\pi\Big| < \alpha\Big\}.
\end{equation}
\noindent Applying Lemma \ref{ml} with  $\ep=\ep_1=\ep_0/2^j$ we conclude that
$$
\{\gamma(x-\tau) : 0< \tau \} \cap \Big\{w : \frac{\ep_0}{2^{j+1} L} < |w| < \frac{\ep_0 L}{2^{j}} \Big\} \subset R_j.
$$
\noindent We need to introduce another polar rectangle
$$
S_j = R_j \cap \Big\{w : \frac{\ep_0 L}{2^{j+1}} < |w|\Big\}, \quad j=0,1,2, \dots
$$

We define inductively $\Delta_z =\Delta_0$ on $S_j$ by just
requiring that the Jordan arc $ \Delta_0 \cap \overline{S_j} $ lies
in the unbounded component of the complement of $\Gamma$,
$\overline{S_j} $ being the closure of $S_j.$ We then connect
$\Delta_0 \cap \overline{S_0} $ with $\infty$ by a Jordan arc in the
complement of $\Gamma,$ with the only precaution of not reentering the
disc $D(0,\ep_0)$ once $\Delta_0$ has left it.

It is worth pointing out that the axis of two consecutive polar rectangles $R_j$ and $R_{j+1}$ make an angle less than $\alpha$.
This follows by the defining property of $\ep_0$ (see the proof of Lemma \ref{ml}).

\begin{lemm}\label{argument}
\begin{equation}\label{log}
 \log(\gamma(x-\ep))-\pi i = \log(-\gamma(x-\ep)), \quad x \in \mathbb{R}, \quad 0 < \ep \leq \ep_0.
\end{equation}
\end{lemm}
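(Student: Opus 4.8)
The plan is to reduce \eqref{log} to the evaluation of a single line integral of $dw/(w-z)$ along a conveniently chosen arc inside the slit domain $\mathbb{C}\setminus\Delta_z$ on which the branch $\log(w-z)$ lives. Normalize as in the construction, so that $z=0=\gamma(x)$, and set $r=|\gamma(x-\epsilon)|$. First I would apply Lemma~\ref{ml} with $\epsilon_1=\epsilon$ and $\tau=\epsilon$: condition \eqref{sec} holds because the bilipschitz bound gives $\epsilon/(2L)<\epsilon/L\le r\le L\epsilon$, so the lemma produces a real number $\theta$ with $\gamma(x-\epsilon)=r e^{i\theta}$ and $|\theta-(\Arg(\gamma(x+\epsilon))+\pi)|<\alpha$; equivalently $-\gamma(x-\epsilon)=r e^{i(\theta-\pi)}$ with $|(\theta-\pi)-\Arg(\gamma(x+\epsilon))|<\alpha$.

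Next, let $\Sigma=\{r e^{is}:\theta-\pi\le s\le\theta\}$ be the closed semicircular arc running clockwise from $\gamma(x-\epsilon)$ (at $s=\theta$) to $-\gamma(x-\epsilon)$ (at $s=\theta-\pi$). I claim that $\Sigma\cap\Delta_0=\emptyset$. Granting this, since the branch of $\log$ is holomorphic on $\mathbb{C}\setminus\Delta_0$ with derivative $1/w$ and $\Sigma\subset\mathbb{C}\setminus\Delta_0$, we get
\[
\log(-\gamma(x-\epsilon))-\log(\gamma(x-\epsilon))=\int_{\Sigma}\frac{dw}{w}=\int_{\theta}^{\theta-\pi} i\,ds=-\pi i,
\]
which is exactly \eqref{log}. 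Thus everything reduces to showing that $\Sigma$ avoids $\Delta_0$.

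To prove the claim, recall that $\Delta_0$ lies in $\bigcup_{j\ge 0}\overline{S_j}$ together with a tail that leaves $D(0,\epsilon_0)$ and never returns, and that the $S_j$ meeting the circle $\{|w|=r\}$ are the finitely many consecutive ones with $\epsilon_0 L 2^{-j-1}\le r\le \epsilon_0 L 2^{-j}$; since their axes, and the axis of the rectangle containing $\gamma(x-\epsilon)$, are all within $O(\alpha)$ of each other (and $\theta$ is within $O(\alpha)$ of the latter, by Lemma~\ref{ml} together with asymptotic conformality, which places $\gamma(x+\epsilon)$ near the chord $[0,\gamma(x+\epsilon_0 2^{-j_0})]$), we obtain $\Delta_0\cap\{|w|=r\}\subset\{r e^{is}:|s-\theta|\le C\alpha\}$. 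It remains to locate this small arc on the correct side of $\gamma(x-\epsilon)=r e^{i\theta}$. By the defining property of $\Delta_0$ it lies in the unbounded component of $\mathbb{C}\setminus\Gamma$, which is the right-hand side of $\Gamma$ with respect to $\gamma$ (as recorded in the proof of Lemma~\ref{lemm_curvature}); near $\gamma(x-\epsilon)$ the sub-arc of $\Gamma$ joining $\gamma(x-\epsilon)$ to $0$ is, by asymptotic conformality (the estimate $1+\cos\phi\le 2\delta L^{6}(1+L^{2})^{2}$ from the proof of Lemma~\ref{ml} applied to $\gamma(x-\epsilon)$ and $\gamma(x-\sigma)$, $0<\sigma<\epsilon$), confined to a thin lens about the chord $[0,\gamma(x-\epsilon)]$, so it meets $\{|w|=r\}$ only at angles $\theta+O(\sqrt\delta)$, and moving clockwise from $\gamma(x-\epsilon)$ along $\{|w|=r\}$ enters the bounded (interior) side of $\Gamma$ while the unbounded side, hence $\Delta_0$, is reached only by moving counterclockwise. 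Choosing $\alpha$ and the asymptotic-conformality parameter small enough, we conclude $\Delta_0\cap\{|w|=r\}\subset\{r e^{is}:\theta<s<\theta+\pi\}$, which is disjoint from $\Sigma$.

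The hard part will be this last step: pinning down that $\Delta_0$ sits strictly on the counterclockwise side of $\gamma(x-\epsilon)$ on the circle of radius $r$, and that neither $\Gamma$ nor $\Delta_0$ can curl clockwise past the angle $\theta$ there. This is precisely where the background hypothesis of asymptotic conformality is indispensable, since it is what forces the relevant arc of $\Gamma$ to hug the chord $[0,\gamma(x-\epsilon)]$ tightly enough that the whole clockwise semicircle stays on the interior side of $\Gamma$, and hence clear of $\Delta_0$. A minor additional point is that when $r>\epsilon_0$ the tail of $\Delta_0$ could in principle meet $\{|w|=r\}$; this is dealt with by choosing the tail so that it leaves $D(0,\epsilon_0)$ within a slight widening of the same angular wedge and runs off to $\infty$ without ever turning clockwise of $\theta$.
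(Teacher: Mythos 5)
Your reduction of \eqref{log} to showing that a path from $-\gamma(x-\ep)$ to $\gamma(x-\ep)$ avoids $\Delta_0$ is the right general idea, and your confinement of $\Delta_0\cap\{|w|=r\}$ to an angular window of size $O(\alpha)$ about $\theta$ is essentially the paper's observation about the polar rectangles $R_j,\dots,R_{j+N-1}$. But the step you yourself flag as ``the hard part'' is a genuine gap, not a technicality. Your path is the \emph{closed} semicircle ending at $\gamma(x-\ep)=re^{i\theta}$, so you must prove that $\Delta_0$ meets the circle $\{|w|=r\}$ only at angles strictly counterclockwise of $\theta$. The orientation argument you give (``moving clockwise from $\gamma(x-\ep)$ along the circle enters the bounded side'') is a first-order local statement at the single crossing $\gamma(x-\ep)$; it does not rule out that the backward arc $\{\gamma(x-\tau):\tau>0\}$ crosses the circle $\{|w|=r\}$ several times inside the wedge (neither the bilipschitz condition nor asymptotic conformality prevents $|\gamma(x-\tau)|$ from oscillating through the value $r$ at comparable scales), in which case the unbounded component has points at radius $r$ and angles slightly \emph{clockwise} of $\theta$. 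Since the paper constructs $\Delta_0$ on each $S_j$ only by ``requiring that $\Delta_0\cap\overline{S_j}$ lies in the unbounded component,'' nothing forbids $\Delta_0$ from threading through such a clockwise channel, and then your semicircle $\Sigma$ meets $\Delta_0$ and the identity $\log(-\gamma(x-\ep))-\log(\gamma(x-\ep))=\int_\Sigma dw/w$ breaks down. To salvage your route you would have to re-construct $\Delta_0$ with an additional property (always staying counterclockwise of the outermost crossing at every radius) and prove that this is possible; you provide no such construction.

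The paper's proof is designed precisely to sidestep this side-of-$\theta$ question, and it does so with two devices you do not use. First, it exploits the quantization $\log(\gamma(x-\ep))-\log(-\gamma(x-\ep))=\pi i+2\pi m i$, so an \emph{approximate} evaluation of the path integral with error $O(\beta)<\pi$ already forces $m=0$; no exact value $-\pi i$ is needed. Second, its path $\varsigma=A\cup\sigma$ stops the circular arc $A$ at the point $z(x,\ep)$ on the ray of angle $\Arg(\gamma(x+\ep))+\pi-\beta$, i.e.\ strictly clockwise of the whole wedge that can contain $\Delta_0$ at that radius, and then reaches $\gamma(x-\ep)$ by a short arc $\sigma$ running through the \emph{bounded} component of $\mathbb{C}\setminus\Gamma$ (whose existence, with $\operatorname{length}(\sigma)\lesssim|z(x,\ep)-\gamma(x-\ep)|\lesssim\beta\,|\gamma(x-\ep)|$, comes from Tukia's bilipschitz extension theorem). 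Since $\Delta_0$ lies in the unbounded component, $\sigma$ automatically avoids it, so the paper never needs to know on which angular side of $\gamma(x-\ep)$ the arc $\Delta_0$ passes — exactly the information your argument cannot supply. As written, your proposal does not establish the lemma.
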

\begin{proof}
We know that
 \begin{equation}\label{diflog}
  \log(\gamma(x-\ep))-\pi i = \log(-\gamma(x-\ep))+ 2 \pi m i
 \end{equation}
for some integer $m$. Our goal is to compute the difference
$$
\log(\gamma(x-\ep))-\log(-\gamma(x-\ep))
$$
by the integral
$$
 \int_{\varsigma} \frac{1}{z} \,dz,
$$
where $\varsigma$ is an appropriately chosen Jordan arc connecting
$-\gamma(x-\ep)$ to $\gamma(x-\ep)$ in the complement of $\Delta_0.$

\begin{figure}[h!]

\caption{The curve $\varsigma$}
\vspace{0.5cm}
\centering
\includegraphics[width=0.75\textwidth]{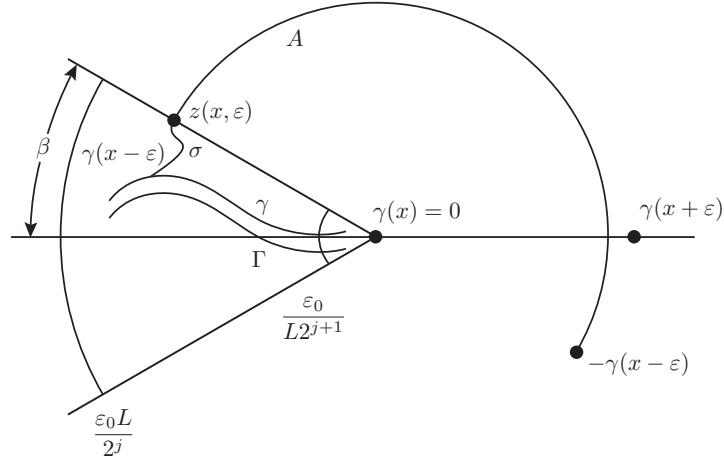}
\end{figure} 

Assume that $\ep_0 / 2^{j+1} < \ep \leq \ep_0 / 2^{j}$, for some
non-negative integer $j.$  Define $N$ as the smallest integer
satisfying
$$
\frac{L \ep_0}{2^{j+N}} \leq \frac{\ep_0}{L 2^{j+1}}.
$$
This is equivalent to  $L^2 \leq 2^{N-1}$ and so $N$ depends only on
$L.$ Hence $R_k \subset D(0,{\ep_0}/{L \,2^{j+1}}),\; k \geq
j+N,$ and, in particular, $R_k,\; k \geq j+N,$ does not intersect
the circumference  $\partial D(0,|\gamma(x-\ep)|).$

The angle between the axis of the polar rectangle $R_{j+l}$ and that
of $R_j$ is not greater than $l \alpha \leq N \alpha, \; l=1, 2,...,
N-1.$ Set $\beta = N \alpha,$ so that $\beta$ can be as small as
desired by taking $\alpha= \alpha(L)$ appropriately. We conclude
that
$$
R_{j+l}\subset \{w: w= |w| e^{i \theta} \;\, \text{with}\;\, |\theta
- \operatorname{Arg}(\gamma(x+\ep)+\pi)| < \beta \}, \quad l =
1,2,\dots,N-1.
$$

We are now ready to define the Jordan arc $\varsigma.$ Let
$z(x,\ep)$ be the point at the intersection of the circumference
$\partial D(0,|\gamma(x-\ep)|)$ and the ray
$$\{w: w= |w|e^{i \theta} \;\, \text{with}\;\, \theta= \operatorname{Arg}(\gamma(x+\ep)+\pi)- \beta \}.$$
\noindent Let $A$ stand for the arc in  $\partial
D(0,|\gamma(x-\ep)|)$ having $-\gamma(x-\ep)$ as initial point and
$z(x,\ep)$ as end point (counterclockwise oriented).

There exists a rectifiable Jordan arc $\sigma$ joining the points
$z(x,\ep)$ and $\gamma(x-\ep)$ in the bounded component of the
complement of $\Gamma$ with the property that
$$\operatorname{length}(\sigma) \leq C\, |z(x,\ep)-\gamma(x-\ep)|.$$
This can be seen readily as follows. Set $\tilde{\gamma}(e^{ix})=
\gamma(x),\; x \in \mathbb{R}.$  Then $\tilde{\gamma}$ is a
bilipschitz homeomorphism  between $\mathbb{T}$ and $\Gamma$ and
thus can be extended to a global bilipschitz homeomorphism of the
plane onto itself (see \cite{Tuk1},\cite{Tuk2}). The existence of the arc $\sigma$ is then
easily proved by transferring the question via the extended
bilipschitz homeomorphism.

Define $\varsigma = A \cup \sigma,$ oriented as already specified.
Note that $\varsigma$ lies in the complement of $\Delta_0,$ by the
previous discussion, in particular, the definition of $N$ and
$\beta.$
Therefore
\begin{equation}\label{varlog}
\log(\gamma(x-\ep))-\log(-\gamma(x-\ep)) =  \int_{\varsigma}
\frac{1}{z} \,dz.
\end{equation}
On one hand we have
\begin{equation}
\int_{A} \frac{1}{z} \,dz = \pi i + O(\beta)
\end{equation}
and on the other hand
\begin{equation}
\Big|\int_{\sigma} \frac{1}{z} \,dz\Big| \leq
\frac{C\,|z(x,\ep)-\gamma(x-\ep)|}{|\gamma(x-\ep)|} \leq C\,
\beta=O(\beta).
\end{equation}
If $\beta$ is small enough so that $O(\beta)< \pi,$ then, by
\eqref{diflog}, we get that $m=0,$ and the lemma is proved.
\end{proof}

We need a final lemma, which concludes the proof of the Theorem.
\begin{lemm}\label{efa}
Let $\Gamma$ be an  asymptotically conformal chord-arc curve and let $\gamma$ be a bilipschitz parametrization of $\Gamma$ (in the sense of \eqref{bilipschitz}).
Then there exists a constant $C > 1$ and a positive number $\ep_0$ such that
\begin{equation}\label{efasegon}
C^{-1}\, \frac{|\gamma(x+\ep)+\gamma(x-\ep)- 2 \gamma(x)|}{\ep} \leq |F(x,\ep)|
\leq C \, \frac{|\gamma(x+\ep)+\gamma(x-\ep)- 2 \gamma(x)|}{\ep},
\end{equation}
for $x \in \mathbb{R}$ and $0< \ep < \ep_0.$
\end{lemm}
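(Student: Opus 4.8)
The plan is to collapse $F(x,\ep)$, after the normalisation $\gamma(x)=0$, to a single principal logarithm $\Log\zeta$ of a complex number $\zeta$ confined to a fixed compact subset $K$ of $\mathbb{C}\setminus(-\infty,0]$, and then to invoke the elementary fact that $\zeta\mapsto\Log(\zeta)/(\zeta-1)$, extended by the value $1$ at $\zeta=1$, is continuous and zero-free on any such set, hence bounded above and below there by positive constants. In this way the comparison \eqref{efasegon} reduces to the statement that $|\Log\zeta|$ is comparable to $|\zeta-1|$.

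So assume $\gamma(x)=0$, which affects neither side of \eqref{efasegon}, and put $u=\gamma(x+\ep)$ and $v=-\gamma(x-\ep)$, so that $u-v=\gamma(x+\ep)+\gamma(x-\ep)-2\gamma(x)$ is exactly the second difference. By Lemma~\ref{argument} one has $\log(\gamma(x-\ep))=\log(-\gamma(x-\ep))+\pi i$ for $0<\ep\le\ep_0$, where $\log$ denotes the branch attached to the arc $\Delta_0$ constructed in Section~4; substituting this into \eqref{BBB} absorbs the additive $\pi i$ and gives the identity $F(x,\ep)=\log u-\log v$. The geometric input I would then collect is twofold: the bilipschitz bound \eqref{bilipschitz} gives $\ep/L\le|u|,|v|\le L\ep$ as soon as $\ep_0\le T/2$, and Lemma~\ref{ml}, applied with $\ep_1=\tau=\ep$ (its hypothesis \eqref{sec} checked via \eqref{bilipschitz}) and with a small angle $\alpha=\alpha(L)$ to be fixed, shows that some argument of $v$ lies within $\alpha$ of $\Arg(\gamma(x+\ep))$. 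Choosing $\alpha<\pi/2$, the quotient $\zeta:=u/v$ then lies in the fixed compact set $K:=\{re^{i\theta}:L^{-2}\le r\le L^{2},\ |\theta|\le\alpha\}$, which contains $1$ and misses $(-\infty,0]$.

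The heart of the argument is the identity $F(x,\ep)=\Log\zeta$, and for this it is enough that the straight segment $[v,u]$ be disjoint from $\Delta_0$. That segment lies in the cone $\{|\arg w-\Arg(\gamma(x+\ep))|<\alpha\}$, and the bound $|w|\ge\operatorname{Re}\!\big(e^{-i\Arg(\gamma(x+\ep))}\,w\big)\ge(\cos\alpha)\,\ep/L$ for $w\in[v,u]$, together with $|w|\le\max(|u|,|v|)\le L\ep$, confines $[v,u]$ to the annulus $\{(\cos\alpha)\ep/L\le|w|\le L\ep\}$; after shrinking $\ep_0$ by the factor $L$ this annulus lies in the disc outside of which $\Delta_0$, by construction, never returns, so only the part of $\Delta_0$ inside that disc — which is contained in $\bigcup_j\overline{R_j}$ — is relevant. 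Only boundedly many, at most $m_0=m_0(L)$, of the polar rectangles $R_k$ meet the annulus above, and iterating the estimate underlying Lemma~\ref{ml} exactly as in the remark following \eqref{polarR} (which compares the axes of two consecutive $R_j$) shows that the axes of all these $R_k$ lie within $C(L)\alpha$ of the direction $\Arg(\gamma(x+\ep))+\pi$. Fixing $\alpha=\alpha(L)$ small enough that $\alpha+C(L)\alpha<\pi$ makes the cone carrying $[v,u]$ disjoint from these rectangles, so $[v,u]\subset\mathbb{C}\setminus\Delta_0$. Since $\log$ is a holomorphic primitive of $w\mapsto1/w$ on the simply connected domain $\mathbb{C}\setminus\Delta_0$, we obtain $F(x,\ep)=\log u-\log v=\int_{[v,u]}\frac{dw}{w}$; the linear substitution $w=v\omega$ carries this segment onto $[1,\zeta]$, which lies in the convex cone $\{|\arg\omega|\le\alpha\}\subset\mathbb{C}\setminus(-\infty,0]$, and turns the integral into $\int_{[1,\zeta]}\frac{d\omega}{\omega}=\Log\zeta$.

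Finally, the bounds on $\Log(\zeta)/(\zeta-1)$ over $K$ give that $|F(x,\ep)|=|\Log\zeta|$ is comparable, with constants depending only on $L$, to $|\zeta-1|=|u-v|/|v|$, and the latter is comparable to $|u-v|/\ep$ because $\ep/L\le|v|\le L\ep$; this is precisely \eqref{efasegon}. The main obstacle I anticipate is the claim that $[v,u]$ avoids $\Delta_0$: it requires quantifying how far the branch cut $\Delta_0$ can swing angularly over the $O(1)$ dyadic scales comparable to $\ep$, i.e. bounding the cumulative rotation of the axes of the rectangles $R_j$ by a fixed multiple of $\alpha$, and then choosing $\alpha=\alpha(L)$ — and with it $\ep_0$ — once and for all so that this rotation, together with the aperture of the cone containing $[v,u]$, stays below $\pi$.
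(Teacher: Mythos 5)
Your proposal is correct and follows essentially the same route as the paper: normalize $\gamma(x)=0$, use Lemma \ref{argument} to write $F(x,\ep)=\log(\gamma(x+\ep))-\log(-\gamma(x-\ep))$, observe that the segment joining $-\gamma(x-\ep)$ to $\gamma(x+\ep)$ avoids $\Delta_0$ so that $F$ equals the integral of $1/w$ along it, and conclude from the bilipschitz bounds together with the small-angle information of Lemma \ref{ml}. The only difference is cosmetic: where you pass to $\zeta=u/v$ and invoke boundedness of $\Log(\zeta)/(\zeta-1)$ on a fixed compact subset of $\mathbb{C}\setminus(-\infty,0]$, the paper estimates the same segment integral directly (cosine theorem for the upper bound, taking real parts for the lower), and your more detailed justification that the segment misses $\Delta_0$ is a welcome expansion of a point the paper only asserts from the construction.
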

\begin{proof}
Without loss of generality assume that $\gamma(x)=0.$  Let $\ep_0$ be the small number provided by Lemma \ref{ml}. By the construction of
the arc $\Delta_0$ described  in the proof of Lemma \ref{ml} we have that the segment joining $-\gamma(x-\ep)$ and $\gamma(x+\ep)$ lies in the complement of $\Delta_0.$  We have, by Lemma \ref{argument},
\begin{align}
F(x,\epsilon) & = \log\big(\gamma(x+\epsilon)\big) - \log\big(\gamma(x-\epsilon)\big) + \pi i \\ &
= \log\big(\gamma(x+\epsilon)\big) - \log\big(-\gamma(x-\epsilon)\big)
\end{align}
and so
\begin{align}
F(x,\epsilon) & = \int_0^1 \frac{d}{dt} \log\big(-\gamma(x-\epsilon)+ t(\gamma(x+\epsilon)+\gamma(x-\epsilon)) \big) \,dt\\ &
= \int_0^1 \frac{\gamma(x+\epsilon)+\gamma(x-\epsilon)}{-\gamma(x-\epsilon)+ t(\gamma(x+\epsilon)+\gamma(x-\epsilon))}  \,dt.
\end{align}
Set, to simplify notation, $a= -\gamma(x-\epsilon)$, $b=\gamma(x+\epsilon)$  and let $\theta$ denote the angle between $a$ and $b$. By Lemma \ref{ml} we know that $\theta$ is as small as we wish. In particular we can assume that $\cos(\theta) \geq 1/2.$ Thus, using the cosine Theorem,
\begin{align}
|a+t(b-a)|^2 & = (1-t)^2 |a|^2 + t^2 |b|^2 + 2 (1-t) t |a| |b| \cos(\theta) \\ & \geq \frac{1}{2}\,((1-t)|a|+t |b| )^2 \geq \frac{\ep^2}{2L^2},
\end{align}
and
\begin{equation}
|F(x,\ep)| \leq  \frac{\sqrt{2} L }{\ep} |\gamma(x+\ep)+\gamma(x-\ep)|,
\end{equation}
which is the upper estimate in \eqref{efasegon}.

For the lower estimate we set $z_t= -\gamma(x-\epsilon)+ t(\gamma(x+\epsilon)+\gamma(x-\epsilon)). $  Since
$ \operatorname{Re}(z_t) \geq  |z_t|/2 $ and $  |z_t|  \geq  \ep / (\sqrt{2} L)$
\begin{align}\label{efalower}
 \Big| \int_0^1 \frac{1}{z_t}  \,dt \Big|  \geq
\operatorname{Re} \int_0^1 \frac{1}{z_t}  \,dt  & =
 \int_0^1 \frac{\operatorname{Re} (z_t) }{| z_t|^2}  \,dt  \\&   \geq \int_0^1   \frac{1}{2| z_t|}\,dt  \geq
\frac{L}{\sqrt{2}  \ep}.
\end{align}
To complete the proof of the Theorem one only needs to combine
Lemmas \ref{Flog}, \ref{smalltrunc} and \ref{efa}.
\end{proof}

%FIGURA TEOREMA PRINCIPALE
%\begin{figure}[h]
%\label{fig:second_diff}
%
%\centering
%\begin{tikzpicture}
%
% 
%\draw
%  (-3,-1) coordinate (a) node[right] {$\gamma(x-\epsilon)$}
%  -- (0,0) coordinate (b) node[above left] {$\gamma(x)$};
%%  pic["$\alpha(x)$",draw=red,-,angle eccentricity=1.2,angle radius=0.9cm] {angle=a--b--c};
%
%\draw
%  (0,0) coordinate (a) %node[right] {$\gamma(x-\epsilon)$}
%  -- (3.5,0) coordinate (b) node[below right] {$\gamma(x+\epsilon)$};
%  
%  \draw
%  (3,1) coordinate (a) node[above right] {$\gamma(x)-\gamma(x-\epsilon)$}
%  -- (0,0) coordinate (b) node[above left] {}
%  -- (3.5,0) coordinate (c) node[above right] {}
%  pic["\color{blue}$\alpha(x)$",draw=blue, thick ,-,angle eccentricity=2,angle radius=0.7cm] {angle=c--b--a};
%
%\draw [-, thick, red] (3,1) -- node [right] {$\gamma(x)-\gamma(x-\epsilon)+t\Delta^2_{\epsilon}(x)$} (3.5,0);
%
%   \draw [thick] (-3.5,-1.5) to [ curve through ={(-3,-1)..(0,0)..(3.5,0)}] (4,0.25);% random curve 
%\end{tikzpicture}
%\caption{The geometric scenario of the proof in the case $\gamma(x)=0.$ In the general case, the vectors $\gamma(x)-\gamma(x-\epsilon)$ and the segment $\{\gamma(x)-\gamma(x-\epsilon)+t\Delta^2_\epsilon\gamma(x)\}_{t\in [0,1]}$ are just translated.}
%\end{figure}

\begin{rem*}\label{rem_decay}
Let  $a= \gamma(x)-\gamma(x-\epsilon)$, $b=\gamma(x+\epsilon)-\gamma(x)$ and let $\alpha(x,\epsilon)$ be the angle spanned by $a$ and $b$.
By geometric considerations and using the cosine Theorem, % analogous to the ones in the previous proof
one can see that for a bilipschitz parametrization $\gamma$ such that
\begin{equation}\label{bilipschitz2}
  c\,|x-y| \leq |\gamma(x) -\gamma(y)| \leq C \,|x-y|, \quad x, y \in \mathbb{R}, \quad |x-y| \leq \frac{T}{2},
\end{equation}
we get the a priori estimate
\begin{equation}
|\gamma(x+\ep)+\gamma(x-\ep)- 2 \gamma(x)|^{2}\leq c^2\epsilon^2+C^2\epsilon^2-2cC\epsilon^2 \cos\alpha(x,\epsilon).
\end{equation}
So, in the general case, we can guarantee just a linear decay of the second finite difference $|\gamma(x+\epsilon)+\gamma(x-\epsilon)-2\gamma(x)|$ and the logarithmic condition \eqref{snddiff} gives informations about the local behavior of the best constants $c$ and $C$ around $x$ and about the decay of $\alpha(x,\epsilon)$ for $\epsilon$ small. This remark will be useful in the next section.
\end{rem*}

\section{An example}

In this section we provide an example of curve $\gamma$ which is not $C^1$ but for which the improved Cotlar's inequality \eqref{m2} holds. The curve will be constructed in a recursive way and will be parametrized by arc-length. Without loss of generality, we will focus on defining a curve which is not closed. Indeed, possibly by connecting the ends of this curve in a smooth way, we can reduce to the same environment of the previous sections.\\
Let $0<\alpha<\pi/2.$ Let $F_{\alpha}:[0,1]\to	 \mathbb{R}$ be the function with support in $[1/4,3/4]$ which is linear in $[1/4,1/2]$ and $[1/2,3/4]$ with slope $\tan\alpha$ in $[1/4,1/2]$ and $-\tan\alpha$ in $[1/2,3/4]$. In other words
\begin{equation}
F_{\alpha}(t):=\max\Big\{0, \Big(\frac{1}{4}-\Big|t-\frac{1}{2}\Big|\Big)\tan\alpha\Big\}.
\end{equation}
Let $\xi>0$. For $t\in\mathbb{R}$ we define the function
\begin{equation}
\eta_\xi(t):=\eta\Big(\frac{t}{\xi}\Big)\frac{1}{\xi},
\end{equation}
where $\eta$ is a smooth, even and positive function such that $\supp\eta\subset [-1,1]$ and $\int\eta(t)dt=1.$
For $0<\xi<1/100$ we define the regularized function
\begin{equation}
\lambda_{\alpha}:=F_\alpha*\eta_\xi.
\end{equation}
We will call the curve $\Lambda_{\alpha}:=\big(t,\lambda_{\alpha}(t)\big)_{t\in[0,1]}$ $\alpha$-\textit{patch.}\\
An $\alpha-$patch has the following properties:
\begin{itemize}
\item $\Lambda_{\alpha}$ is the graph of a function $\lambda_{\alpha}:[0,1]\rightarrow \mathbb{R}$ which is symmetric around $1/2$.
\item if we denote by $[a,b]$ the segment joining the points $a,b\in\mathbb{R}^2,$ then $\Lambda_{\alpha}$ contains the segments $I_{\alpha}:=[(0,0\big),(1/4-\xi,0)],$ $II_{\alpha}:=[(1/4+\xi,\xi\tan\alpha),(1/2-\xi,(1/4-\xi)\tan\alpha)],$ $III_{\alpha}:=[(1/2+\xi, (1/4-\xi)\tan\alpha),(3/4-\xi,\xi\tan\alpha)]$ and $IV_{\alpha}:=[(3/4+\xi,0),(1,0)].$ We denote by $C^i_\alpha,$ $i=1,2,3$ the remaining three non-affine parts of the graph. Precisely, $C^1_\alpha$ joins the segments $I_\alpha$ and $II_\alpha,$ $C^2_\alpha$ the segments $II_\alpha$ and $III_\alpha$ and $C^3_\alpha$ the segments $III_\alpha$ and $IV_\alpha.$
\item the function $\lambda_\alpha$ is convex on the intervals below $C^1_\alpha$ and $C^3_\alpha$ and concave on the interval below $C^2_\alpha.$
\end{itemize}
The idea is that the $\alpha-$patch is a smoothened corner, as shown in Figure 2.
\begin{figure}[h]
\label{fig:non_c1}
\centering
\begin{tikzpicture}

\draw[thick]
  (0,0) coordinate (a) node[below left] {0}
  -- (1,0) coordinate (c) node[above] {$I_{\alpha}$}
  -- (1.7,0) coordinate (b) node[below right] {$\frac{1}{4}$};

\draw[thick]
  (3,1.47) coordinate (c) node [above left] {$II_{\alpha}$}
  (2.3,0.44) coordinate (a) node[above left] {}

  -- (3.7,2.5) coordinate (b) node[below right] {};

\draw[thick]
(5,1.47) coordinate (c) node [above right] {$III_{\alpha}$}
  (4.3,2.5) coordinate (a) node[below left] {}
  -- (5.7,0.44) coordinate (b) node[below right] {};
  
\draw[thick]
  (6.3,0) coordinate (a) node[below] {$\frac{3}{4}$}
  -- (7,0) coordinate (c) node [above] {$IV_{\alpha}$}
  -- (8,0) coordinate (b) node[below right] {1}; 
  
\draw [dashed]
  (0,0) coordinate (a) node[below left] {}
  -- (8,0) coordinate (b) node[below right] {};
  \draw [dashed]
  (3.7,2.5) coordinate (a) node[above right] {}
  -- (2,0) coordinate (b) node[above left] {}
  -- (5,0) coordinate (c) node[above right] {}
  pic["\color{blue}$\alpha$",draw=blue, thick ,-,angle eccentricity=1.7,angle radius=0.7cm] {angle=c--b--a};

   \draw [thick,red] (1.7,0) to [ curve through ={(1.9,0.07)}] (2.3,0.44);% random curve
   \draw [thick,red] (3.7,2.5) to [ curve through ={(4,2.7)}] (4.3,2.5);
      
   \draw [thick,red] (5.7,0.44) to [ curve through ={(6.1,0.07)}] (6.3,0);
\end{tikzpicture}
\caption{An $\alpha-$patch}
\end{figure}
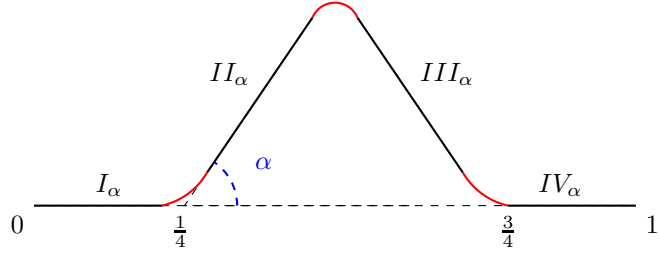
\begin{rem}\label{size_tau}
Let us denote by $\tau(\alpha)$ the difference between the length of the (non-smoothened) graph of $F_{\alpha}$ and the length of $\Lambda_\alpha$. For what follows, we need to estimate its behavior for small values of $\alpha$. It suffices to observe that
\begin{equation}\label{tau_error}
\begin{split}
\tau(\alpha)&:=\length(F_\alpha)-\length(\Lambda_\alpha)\\
&=\int^1_0\Big(\sqrt{1+|f'_\alpha*\eta_\xi|^2(t)}\Big)-\Big(\sqrt{1+|f'_\alpha|^2(t)}\Big)dt\\
&=\int^1_0\frac{|f'_\alpha*\eta_\xi|^2(t)-|f'_\alpha|^2(t)}{\Big(\sqrt{1+|f'_\alpha*\eta_\xi|^2(t)}\Big)+\Big(\sqrt{1+|f'_\alpha|^2(t)}\Big)}dt\leq 2||f'_\alpha||^2_\infty = 2\tan\alpha.
\end{split}
\end{equation}
\end{rem}
\subsection*{Definition of the curve $\Gamma$}
Let $\alpha_j:=1/j$ for $j=1,2,\ldots$ positive integer. For the sake of notational convenience we replace the subscript $\alpha_j$ by $j$; for instance, we write $\Lambda_j$ for $\Lambda_{\alpha_j}$, $I_j$ for $I_{\alpha_j},\ldots,IV_j$ for $IV_{\alpha_j}$ and $C^i_j$ for $C^i_{\alpha_j}$. Moreover, $\tau_j:=\tau(\alpha_j).$
Now we can define $\Gamma$ according to the following recursive steps:
\begin{itemize}
\item $\Gamma_1:=\Lambda_1.$
\item We would like to glue on $II_1$ an appropriate rescaled, translated and rotated copy $\tilde{\Lambda}_2$ of $\Lambda_2.$ The angle of rotation is $\alpha_1.$ The scaling factor and the translation are chosen so that the origin of $\tilde{\Lambda}_2$ is $(1/4,0)$ and the end is $\big(1/2, (\tan\alpha)/4\big).$
Denote by $\widetilde{II}_2$ the image of $II_2$ via the same affinity which maps $\Lambda_2$ to $\tilde{\Lambda}_2$; let us use the tilde to denote the images of the other parts of the patch via the same map, too. Delete the segment $II_1$ from $\Lambda_1$ and add $\tilde{\Lambda}_2.$ Now the endings of $\tilde{\Lambda}_2$ should be deleted in order to make a connection with $\Lambda_1.$ The precise expression for the second step curve is
\begin{equation}
\Gamma_2:=\big((\Lambda_1\setminus II_1)\cup \tilde{\Lambda}_2\big)\setminus \big((\tilde{I}_2\cup \widetilde{IV}_2)\setminus II_1\big).
\end{equation}
\item given $\Gamma_n,$ which is a ``gluing'' of affine copies $\tilde{\Lambda}_j$ of $\Lambda_j$ for $j\in\{1,\ldots,n\},$  where $\widetilde{II}_n$ is the image of $II_j$ under the same affinity which maps $\Lambda_j$ to $\tilde{\Lambda}_j$, we define
\begin{equation}
\Gamma_{n+1}:=((\tilde{\Lambda}_n\setminus \widetilde{II}_n)\cup \tilde{\Lambda}_{n+1})\setminus ((\widetilde{I}_{n+1}\cup \widetilde{IV}_{n+1})\setminus \widetilde{II}_n),
\end{equation}
where $\tilde{\Lambda}_{n+1}$ is an re-scaled copy of $\Lambda_{n+1}$ rotated by an angle $\sum_{j=1}^{n+1}\alpha_j$ whose vertices coincide with the images of $(1/4,0)$ and $\big(1/2, \tan(\alpha)/4\big)$ via the transformation of the plain that sends $\Lambda_n$ to $\widetilde{\Lambda}_n$.
\end{itemize}
Then, $\{\Gamma_n\}_n$ converges in the Hausdorff distance (a similar case is presented, for example, in \cite{Falconer}) and we can simply define $\Gamma:=\lim_n\Gamma_n.$
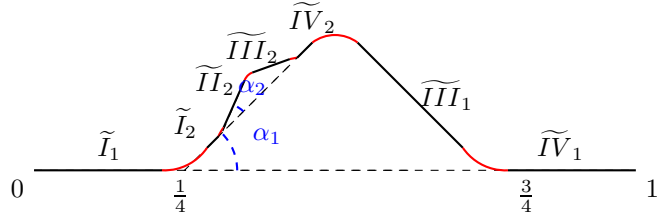
\begin{figure}[h]
\label{fig:exam}
\centering
\begin{tikzpicture}

\draw[thick]
  (0,0) coordinate (a) node[below left] {0}
  -- (1,0) coordinate (c) node[above] {$\widetilde{I}_1$}
  -- (1.7,0) coordinate (b) node[below right] {$\frac{1}{4}$};

\draw[thick]
  (2.3,0.3) coordinate (a) node[above left] {$\widetilde{I}_2$}

  -- (2.45,0.45) coordinate (b) node[below right] {};

%%second patch
  
\draw[thick]
  (2.52,0.57) coordinate (a) node[left] {}

  -- (2.8,1.2) coordinate (b) node[left] {$\widetilde{II}_2$};
  
 \draw[thick]
  (2.9,1.3) coordinate (a) node[above] {$\widetilde{III}_2$}

  -- (3.4,1.48) coordinate (b) node[above] {}; 

\draw [thick,red] (2.45,0.45) to [ curve through ={(2.48,0.51)}] (2.52,0.57);% random curve
\draw [thick,red] (2.8,1.2) to [ curve through ={(2.85,1.27)}] (2.9,1.3);
\draw [thick,red] (3.4,1.48) to [ curve through ={(3.45,1.49)}] (3.5,1.5);  
  
%%back to the first patch  
  
\draw[thick]
  (3.5,1.5) coordinate (a) node[above] {}

  -- (3.7,1.7) coordinate (b) node[above] {$\widetilde{IV}_2$};

\draw[thick]
  (4.3,1.7) coordinate (a) node[below right] {}
  -- (5,1) coordinate (c) node[right] {$\widetilde{III}_1$}
  -- (5.7,0.3) coordinate (b) node[above] {};
  
\draw[thick]
  (6.3,0) coordinate (a) node[below right] {$\frac{3}{4}$}
  -- (7,0) coordinate (c) node [above] {$\widetilde{IV}_1$}
  -- (8,0) coordinate (b) node[below right] {1}; 
  
\draw [dashed]
  (0,0) coordinate (a) node[below left] {}
  -- (8,0) coordinate (b) node[below right] {};
\draw [dashed]
  (3.7,1.7) coordinate (a) node[above right] {}
  -- (2,0) coordinate (b) node[above left] {}
  -- (5,0) coordinate (c) node[above right] {}
  pic["\color{blue}$\alpha_1$",draw=blue, thick ,-,angle eccentricity=1.7,angle radius=0.7cm] {angle=c--b--a};

\draw [dashed]
  (2.8,1.2) coordinate (a) node[above right] {}
  -- (2.5,0.5) coordinate (b) node[above left] {}
  -- (3.4,1.4) coordinate (c) node[above right] {}
  pic["\color{blue}$\alpha_2$",draw=blue, thick ,-,angle eccentricity=1.8,angle radius=0.4cm] {angle=c--b--a};

   \draw [thick,red] (1.7,0) to [ curve through ={(1.9,0.02)}] (2.3,0.3);% random curve
   \draw [thick,red] (3.7,1.7) to [ curve through ={(4,1.8)}] (4.3,1.7);
      
   \draw [thick,red] (5.7,0.3) to [ curve through ={(6.1,0.02)}] (6.3,0);
\end{tikzpicture}
\caption{The second step in the construction of the curve $\Gamma$}
\end{figure}
Let us now state an estimate that we will use in what follows.
\begin{lemm}\label{estim_angle}
Given $0<\alpha<\pi/2$ and $z_1,z_2\in\Lambda_{\alpha}$, we have
\begin{equation}\label{angle_corner}
l(z_1,z_2)\leq \frac{|z_1-z_2|}{\cos\alpha},
\end{equation}
where $l(z_1,z_2)$ denotes the length of the arc of $\Lambda_\alpha$ joining $z_1$ and $z_2$.
\end{lemm}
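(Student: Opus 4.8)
The plan is to reduce the statement to a pointwise bound on the slope of the graph $\Lambda_\alpha$ and then carry out a one-variable computation with arc length.

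First I would record that the profile $F_\alpha$ is Lipschitz with constant $\tan\alpha$: off the three points $1/4,\,1/2,\,3/4$ its derivative exists and takes only the values $0,\,\tan\alpha,\,-\tan\alpha$, so $\|F_\alpha'\|_\infty=\tan\alpha$. Since $\eta_\xi\ge 0$ and $\int\eta_\xi=1$, convolving with $\eta_\xi$ does not enlarge the Lipschitz constant; explicitly, using $\lambda_\alpha'=F_\alpha'*\eta_\xi$,
\[
|\lambda_\alpha'(t)|=\Big|\int F_\alpha'(t-s)\,\eta_\xi(s)\,ds\Big|\le \|F_\alpha'\|_\infty\int\eta_\xi(s)\,ds=\tan\alpha,\qquad t\in\mathbb{R}.
\]
One may also check this piecewise: on $I_\alpha,\dots,IV_\alpha$ the slope is $0$ or $\pm\tan\alpha$, and on each curved piece $C^i_\alpha$ it interpolates monotonically between the two neighbouring constant slopes, so $|\lambda_\alpha'|\le\tan\alpha$ throughout.

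Next I would pass to arc length. We may assume $z_1\ne z_2$; since $\Lambda_\alpha$ is a graph, $z_i=(t_i,\lambda_\alpha(t_i))$ with, say, $t_1<t_2$, and the arc of $\Lambda_\alpha$ joining $z_1$ and $z_2$ is exactly the graph of $\lambda_\alpha$ over $[t_1,t_2]$. Hence
\[
l(z_1,z_2)=\int_{t_1}^{t_2}\sqrt{1+\lambda_\alpha'(t)^2}\,dt\le\sqrt{1+\tan^2\alpha}\,(t_2-t_1)=\frac{t_2-t_1}{\cos\alpha}.
\]
Finally, the horizontal displacement never exceeds the Euclidean distance, so $t_2-t_1\le|z_1-z_2|$, and combining the last two inequalities yields $l(z_1,z_2)\le|z_1-z_2|/\cos\alpha$, as claimed.

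There is no serious obstacle here; the only step requiring a moment of care is the slope bound $|\lambda_\alpha'|\le\tan\alpha$, which is where the specific shape of the mollified profile enters. Note that the argument uses nothing about $\xi$ beyond $\eta_\xi$ being a probability density, and nothing about the convexity or concavity of the smoothened corners.
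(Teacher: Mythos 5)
Your argument is correct and is essentially the paper's own proof: reduce to the slope bound $|\lambda_\alpha'|\le\tan\alpha$, integrate $\sqrt{1+\lambda_\alpha'^2}$ over $[t_1,t_2]$ to get $l(z_1,z_2)\le |t_2-t_1|/\cos\alpha$, and finish with $|t_2-t_1|\le|z_1-z_2|$. The only difference is that you spell out via the convolution with the probability density $\eta_\xi$ why the Lipschitz bound $\tan\alpha$ survives the mollification, a detail the paper asserts without proof.
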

\begin{proof}
Let $t_1:=\lambda^{-1}_{\alpha}(z_1)$ and $t_2:=\lambda^{-1}_{\alpha}(z_2)$. We have $|t_1-t_2|\leq |z_1-z_2|.$ Moreover, because of the way we constructed $\Lambda_{\alpha},$ we have that $|\lambda'_{\alpha}(t)|\leq \tan \alpha$ for every $t\in [0,1].$ Collecting all these observations,
\begin{align}
l(z_1,z_2)&=\int_{t_1}^{t_2}\sqrt{1+|\lambda_{\alpha}'(t)|^2}dt\leq \int_{t_1}^{t_2}\sqrt{1+|\tan\alpha|^2}dt\\
&=|t_2-t_1|\sqrt{1+|\tan\alpha|^2}=\frac{|t_2-t_1|}{\cos\alpha}\leq \frac{|z_2-z_1|}{\cos\alpha}.
\qedhere
\end{align}

\end{proof}
\begin{rem}
Notice that the inequality \eqref{angle_corner} keeps holding for a scaling of $\Lambda_\alpha,$ in particular for the $\tilde{\Lambda}_j,\,j\in\mathbb{N}.$
\end{rem}
Let us define
\begin{equation}
L_n:= 2^{-2n+1}\Big(\prod_{j=1}^{n-1}\cos\alpha_j\Big)^{-1},
\end{equation}
which is half of the diameter of the rescaled patch $\tilde{\Lambda}_n$ in the construction of the curve $\Gamma.$ We will use $L_n$ as a quantifier of the scale.
\begin{lemm} \label{errors}For every $\delta>0$ there exists $k\in\mathbb{N}$ big enough such that for  $z_1,z_2\in \Gamma\cap (\bigcup_{j=k}^{\infty}\tilde{\Lambda}_j)$ we have
\begin{equation}\label{sm_length}
l(z_1,z_2)\leq (1+\delta)|z_1-z_2|.
\end{equation}
\end{lemm}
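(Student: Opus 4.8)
The plan is to show that, once $k$ is large, the subarc of $\Gamma$ joining any two points of the tail $\bigcup_{j\ge k}\tilde{\Lambda}_j$ is contained in a thin cone apart from a piece of negligible length, so it behaves essentially like a straight segment. I would lean on three features of the construction. First, by Lemma~\ref{estim_angle} and the remark after it, each rescaled patch $\tilde{\Lambda}_j$ is chord--arc with constant $1/\cos\alpha_j$, and along $\tilde{\Lambda}_j$ the unit tangent deviates from the ``axis'' of the patch by at most $\alpha_j$. Second, the axis of $\tilde{\Lambda}_{j+1}$ differs from that of $\tilde{\Lambda}_j$ by exactly $\alpha_j=1/j$, since $\tilde{\Lambda}_{j+1}$ is glued onto the rising side $\widetilde{II}_j$ of $\tilde{\Lambda}_j$. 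Third, each patch is a \emph{localized} bump returning to its baseline, so the incoming direction at the start of the surviving part of $\tilde{\Lambda}_j$ equals the outgoing direction at its end, both being the axis direction; this is precisely what makes an excursion ``into the nesting and out again'' nearly straight.

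Concretely, given $\delta>0$, I would first pick $m=m(\delta)\in\mathbb N$ so large that $\sum_{i\ge m}L_{j+i}/L_j\le\delta$ for every $j$ (possible since $L_{j+1}/L_j=(4\cos\alpha_j)^{-1}\le\tfrac12$), and then pick $k=k(\delta,m)$ so large that the half-angle $(m+2)\alpha_k$ is $\le\delta$. Take $z_1\in\tilde{\Lambda}_{j_1}\cap\Gamma$ and $z_2\in\tilde{\Lambda}_{j_2}\cap\Gamma$ with $k\le j_1\le j_2$, set $P:=j_1$, and let $A$ be the subarc of $\Gamma$ from $z_1$ to $z_2$. By the recursive definition $A$ lies in $\tilde{\Lambda}_P$ together with the patches nested inside it, and it splits as $A=A_0\cup A_1$, where $A_0$ is the part of $A$ lying in patches $\tilde{\Lambda}_j$ with $P\le j\le P+m$ and $A_1$ the remaining, possibly empty, part lying in patches with $j>P+m$. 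Since the nesting forces $A$ to contain the entire surviving portion of each such deeper patch, one gets $\length(A_1)\le C\sum_{j>P+m}L_j\le C'4^{-m}L_P$. On the other hand, the second and first features bound the variation of the tangent direction along $A_0$ by $\sum_{l=P}^{P+m-1}\alpha_l+2\alpha_P\le(m+2)\alpha_k\le\delta$, so projecting $A_0$ onto the bisecting direction $e$ of that interval, exactly as in the proof of Lemma~\ref{estim_angle}, gives $\length(A_0)\cos\delta\le \widehat d$, where $\widehat d:=e\cdot(z_2-z_1)+(\text{gap along }A_1)\le |z_1-z_2|+C'4^{-m}L_P$.

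The final step is to compare everything with $|z_1-z_2|$. If $A_1=\varnothing$ the endpoints of $A_0$ are $z_1,z_2$, so $\length(A)=\length(A_0)\le(1+2\delta)|z_1-z_2|$. If $A_1\ne\varnothing$, then $z_1$ and $z_2$ sit in genuinely different branches of the nesting, which — using that the surviving ``left wings'' and ``right wings'' of the patches $\tilde{\Lambda}_j$, $j\ge P$, form two clusters that are separated by a band of width comparable to $L_P$ in the intrinsic coordinate of $\tilde{\Lambda}_P$ — forces $|z_1-z_2|\ge c\,L_P$. Combining $\length(A)=\length(A_0)+\length(A_1)\le (1+2\delta)(|z_1-z_2|+C'4^{-m}L_P)+C'4^{-m}L_P$ with $L_P\le c^{-1}|z_1-z_2|$ yields $\length(A)\le(1+C\delta)|z_1-z_2|$, and running the argument with $\delta/C$ in place of $\delta$ from the start gives the claim.

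I expect the main obstacle to be this last comparison when $A_1\ne\varnothing$: making rigorous, with honest constants, that an arc which dives into the nested patches and returns is ``nearly straight'' — i.e.\ controlling $\widehat d$ and the separation $|z_1-z_2|$ in terms of $L_P$ via the bump structure, which requires tracking the junctions of the surviving wings (and the small bump amplitude $\sim\alpha_P L_P$ of $\tilde{\Lambda}_{P+1}$ as the collinearity defect of the two branches) and checking the various positions of $z_1,z_2$ relative to the wings and the depth $P+m$. The remaining ingredients — the chord--arc constant $1/\cos\alpha_j$ of each patch, the $\alpha_j$ rotation between consecutive axes, and the geometric decay $L_{j+1}/L_j\le\tfrac12$ — are already available from Lemma~\ref{estim_angle}, Remark~\ref{size_tau}, and the definition of $L_n$.
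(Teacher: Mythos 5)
Your strategy is viable and genuinely different from the paper's. The paper never splits the arc by depth: it introduces the tail excess $R_k=l(\zeta_1,\zeta_2)-|\zeta_1-\zeta_2|$ between points of $\tilde{I}_k$ and $\widetilde{IV}_k$ (formula \eqref{err}), proves $R_k/L_k\to 0$ as in \eqref{small_error}, and then, for $z_1,z_2$ at maximal levels $k_1\le k_2$, replaces $z_2$ by its orthogonal projection $z_2'$ onto $\widetilde{II}_{k_1}$, uses the single-patch chord--arc bound \eqref{angle_corner} for $l_{k_1}(z_1,z_2')$, the comparison $l(z_1,z_2)\le l_{k_1}(z_1,z_2')+R_{k_1+1}$, the height bound \eqref{ineq}, and the separation \eqref{ineqL}. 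You instead truncate at depth $P+m$, absorb the deep part $A_1$ by the geometric decay $L_{j+1}/L_j\le 1/2$ (so Remark \ref{size_tau} is not even needed in your route), and control the shallow part $A_0$ by a tangent-cone/projection estimate with total turning at most $(m+2)\alpha_k$; the price is the two-parameter choice ($m$ first, then $k$) and a multi-patch turning estimate in place of the single-patch \eqref{angle_corner}. Both routes ultimately rest on a separation estimate of the same nature: yours is $|z_1-z_2|\ge c\,L_P$ whenever $A_1\neq\varnothing$, the paper's is \eqref{ineqL}, which the paper also states with little justification.

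That separation step is the one place you must repair, and not only because it is left as a sketch: the reason you give for it is false as stated. The surviving left and right wings of the patches $\tilde{\Lambda}_j$, $j\ge P$, are \emph{not} separated by a band of width comparable to $L_P$ --- near the accumulation point $z_0$ the left and right wings of $\tilde{\Lambda}_q$ are only $\sim L_q\ll L_P$ apart. The correct argument needs $P=j_1$ to be the \emph{maximal} level of $z_1$ (say so explicitly), and then a short case analysis: (i) if $z_2$ has level $>P+m\ge P+2$, then $z_2$ lies over the middle portion $\widetilde{II}_{P+1}$ of $\tilde{\Lambda}_{P+1}$, while $z_1\notin\tilde{\Lambda}_{P+1}$ lies beyond its endpoints, so $|z_1-z_2|\gtrsim L_{P+1}\gtrsim L_P$; (ii) if $z_2$ has level $\le P+m$ but the arc still dives below depth $P+m$, the arc must cross $z_0$, so $z_1$ and $z_2$ sit on opposite sides of the nested bump; since each patch is nearly flat, the horizontal separation inside $\tilde{\Lambda}_P$ (between the left wing on one hand, and either the opposite wing of $\tilde{\Lambda}_P$ or the far half of $\tilde{\Lambda}_{P+1}$ on the other) again gives $|z_1-z_2|\gtrsim L_P$, with an absolute constant. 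With this replacing the ``two clusters'' claim, your constants close exactly as you indicate, and the rest of the proposal (the bound $\length(A_1)\lesssim 2^{-m}L_P$, the turning bound $(m+2)\alpha_k\le\delta$, and the projection inequality $\length(A_0)\cos\delta\le e\cdot(z_2-z_1)+\length(A_1)$) is correct.
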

\begin{proof} Let us start with some geometrical observation. \\
Let $k\in\mathbb{N}$ and $\zeta_1,\zeta_2\in\Gamma$. Suppose, moreover, that $\zeta_1\in \tilde{I}_k$ and $\zeta_2\in\widetilde{IV}_k$. It is useful to define
\begin{equation}
R_k:=l(\zeta_1,\zeta_2)-|\zeta_1-\zeta_2|.
\end{equation}
Observe that the definition of $R_k$ does not depend on the choice of $\zeta_1$ and $\zeta_2$ in the respective segments. In particular, by the construction of the curve $\Gamma$ and by the definition of the error term $\tau_j$ in \eqref{tau_error}, it is not difficult to check that we have
\begin{equation}\label{err}
R_k= \Big(3 \sum_{j=k+1}^{\infty} L_j- L_k\Big)- \sum_{j=k+1}^{\infty} 2L_j\tau_j.
\end{equation}
The term between parentheses in the right hand side is the length of the gluing of the 'non-regularized' $\alpha-$patches in the construction and the second sum is an error term due to the smoothing in the definition of $\alpha$-patch.\\
Because of the how we chose $L_j$ and $\tau_j,$ the quantity $R_k$ represents the error we make in estimating the length of the arch of the curve between $\zeta_1\in \tilde{I}_k$ and $\zeta_2\in\widetilde{IV}_k$ compared to $|\zeta_1-\zeta_2|.$ The presence of factor $2L_j$ in the last sum in the right hand side of \eqref{err} is due to the fact that the diameter of $\tilde{\Lambda}_j$ is equal to $2L_j$ and, thus, the error term $\tau_j$ has to be rescaled by that value.
It turns out that
\begin{equation}\label{small_error}
\frac{R_k}{L_k}\rightarrow 0 \,\,\text{ as }k\rightarrow\infty.
\end{equation}
Indeed, we have
\begin{align}
\frac{3}{L_k} \sum_{j=k+1}^{\infty} L_j&=\sum_{j=k+1}^{\infty}\frac{3}{4^{j-k+1}}\Big(\prod_{l=k}^{j}\cos\alpha_j\Big)^{-1}\\ \nonumber
&\leq
\sum_{j=k+1}^{\infty}\frac{3}{4^{j-k+1}}(\cos\alpha_k)^{-(j-k)}=\frac{12 \cos\alpha_k}{4\cos \alpha_k-1}-3 
\end{align}
and the last term tends to $1$ as $k\rightarrow\infty.$ Moreover, using \eqref{tau_error} and since $L_j\leq 2^{k-j}L_k$ for $j>k$, we have that
\begin{equation}
\frac{1}{L_k}\sum_{j=k+1}^{\infty}2L_j\tau_j\lesssim \tau_{k+1} \sum_{j=k+1}^\infty {2^{k-j}}\rightarrow 0\,\,\text{ as }k\rightarrow\infty,
\end{equation}
so that \eqref{small_error} follows.\\
Let us combine this observation with \eqref{angle_corner} to prove \eqref{sm_length}. Let $z_1,z_2\in\Gamma$. Observe that each point of $\Gamma$ belongs to $\widetilde{\Lambda}_j$ for at most two different $j$.
Let $k_1$ be the maximum index such that $z_1\in\tilde{\Lambda}_{k_1}$ and let $k_2$ be the maximum index such that $z_2\in\tilde{\Lambda}_{k_2}$. The rest of the proof works with minor changes if we take the minimum instead of the maximum in the definitions of $k_1$ and $k_2.$\\
Without loss of generality, suppose $k_1\leq k_2.$
If $k_1=k_2$, the definition of $R_k$ and the estimate \eqref{angle_corner} allow us to write
\begin{equation}\label{easy_case}
l(z_1,z_2)\leq \frac{|z_1-z_2|}{\cos\alpha_{k_1}},
\end{equation}
if the point are at a distance $|z_1-z_2|\leq L_{k_1+1}$. For $|z_1-z_2|\geq L_{k_1+1}/4,$ we have to consider the additional error term $R_{k_1+1}$. In particular
\begin{equation}
l(z_1,z_2)\leq \frac{|z_1-z_2|}{\cos\alpha_{k_1}}+R_{k_1+1}\leq \frac{|z_1-z_2|}{\cos\alpha_{k_1}}+\frac{R_{k_1+1}}{4L_{k_1+1}}|z_1-z_2|,
\end{equation}
so that, invoking \eqref{small_error}, the lemma is proven in the case $k_1=k_2$.\\
Let us consider the other case, $k_1<k_2$. If $z_2\in\widetilde{\Lambda}_{k_1}$, \eqref{easy_case} easily applies. So we can suppose $z_2\not\in\widetilde{\Lambda}_{k_1}.$
In this case 
\begin{equation}\label{ineqL}
|z_1-z_2|\geq \frac{L_{k_1+1}}{4}.
\end{equation}
Let $z'_2\in\widetilde{II}_{k_1}$ be the orthogonal projection of $z_2$ on the segment $\widetilde{II}_{k_1}.$ Using the triangular inequality and denoting by
\begin{equation}\label{height}
h_{k_1+1}:=\min\{h: \widetilde{\Lambda}_{k_1+1}\subset [0,h]n_V+V \text{ for some affine line }V \text{ with normal }n_V\}
\end{equation}
the width of $\tilde{\Lambda}_{k_1+1},$ we have
\begin{equation}\label{eps_1}
|z_1-z'_2|\leq |z_1-z_2|+ h_{k_1+1}.
\end{equation}
Let us remark that, by construction of $\Gamma,$
\begin{equation}\label{ineq}
\frac{h_{k_1+1}}{L_{k_1+1}}\rightarrow 0 \text{ as } k\rightarrow\infty.
\end{equation}
Given $m\in\mathbb{N}$ and $u,v\in\Gamma_m$, it is useful to denote by $l_m(u,v)$ the length of the arc of $\Gamma_m$ joining $u$ and $v$.
Now we want to prove that
\begin{equation}\label{tbp}
l(z_1,z_2)\leq l_{k_1}(z_1,z'_2)+ R_{k_1+1}.
\end{equation}
Let us just consider the case $z_1\in \tilde{I}_{k_1},$ since the other cases are analogous.
If $z_{k_2}\in \tilde{I}_{k_1+1}$ or $z_{k_2}\in \widetilde{IV}_{k_1+1},$ \eqref{tbp} holds trivially because $z_2=z'_2.$ Otherwise, let $\zeta$ be a point on $\widetilde{IV}_{k_1+1}$ and let us consider the quantities $l(z_2,\zeta)$ and $|z'_2-\zeta|.$ Observe that the consideration below does not depend on the auxiliary point $\zeta$ of $\widetilde{IV}_{k_1+1}$ we choose. Clearly $l(z_2,\zeta)\geq |z'_2-\zeta|$ and, because of the definition of $R_{k_1+1},$ the equality
\begin{equation}
l(z_1,z_2)+l(z_2,\zeta)=R_{k_1+1}+l_{k_1}(z_1,z'_2)+|z'_2-\zeta|,
\end{equation}
holds. So
\begin{equation}
l(z_1,z_2)=l_{k_1}(z_1,z'_2)+R_{k_1+1}+(|z'_2-\zeta|-l(z_2,\zeta))\leq l_{k_1}(z_1,z'_2)+R_{k_1+1}.
\end{equation}
The proof of the lemma is now over: indeed using \eqref{angle_corner}, \eqref{small_error}, \eqref{ineqL}, \eqref{eps_1} and \eqref{ineq} we get
\begin{align}\label{toinvert}
\frac{l(z_1,z_2)}{|z_1-z_2|}&\leq \frac{l_{k_1}(z_1,z'_2)}{|z_1-z_2|}+\frac{R_{k_1+1}}{|z_1-z_2|} \leq \frac{|z_1-z'_2|}{|z_1-z_2|\cos\alpha_{k_1}}+\frac{R_{k_1+1}}{|z_1-z_2|}\\
&\leq \frac{1}{\cos\alpha_{k_1}} + \frac{4h_{k_1+1}}{\cos\alpha_{k_1}L_{k_1+1}}+\frac{4R_{k_1+1}}{L_{k_1+1}}\rightarrow 1 \,\,\text{ as } k_1\rightarrow \infty. \qedhere
\end{align}
\end{proof}
\begin{prop}\label{nonsmooth}
$\Gamma$ is asymptotically smooth but not $C^1.$
\end{prop}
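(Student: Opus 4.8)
The plan is to isolate the single point of $\Gamma$ at which smoothness can fail, to control it with Lemma~\ref{errors}, and to control the rest of the curve by the fact that it is piecewise smooth; the failure of $C^1$ will then come from the divergence of $\sum_j\alpha_j=\sum_j 1/j$. First I would fix the geometry near the accumulation point. Since $\sum_j\alpha_j^2=\sum_j j^{-2}<\infty$, the product $\prod_j\cos\alpha_j$ converges to a positive limit, so $L_n\asymp 4^{-n}$ and $\operatorname{diam}(\tilde\Lambda_j)\asymp L_j\to 0$. Because $\tilde\Lambda_{j+1}$ is glued inside the segment $\widetilde{II}_j$ of $\tilde\Lambda_j$, the closed sub-arcs $\overline{\bigcup_{j\ge k}\tilde\Lambda_j}$ of $\Gamma$ are nested with diameters tending to $0$, so their intersection is a single point $p\in\Gamma$; moreover $\Gamma$ reaches $p$ along the nested left slopes and leaves it along the nested ``return'' pieces, so $N_k:=\Gamma\cap\bigcup_{j\ge k}\tilde\Lambda_j$ is a (two-sided) relative neighbourhood of $p$ in $\Gamma$, and there is $\rho_k>0$ with $\Gamma\cap D(p,\rho_k)\subset N_k$. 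Away from $p$, $\Gamma$ is a locally finite concatenation of the $C^\infty$ pieces of the patches and of straight segments joined along coinciding tangent directions — the segment $\widetilde{II}_j$ has direction $\Theta_j:=\sum_{l=1}^{j}\alpha_l$ and the replacing patch $\tilde\Lambda_{j+1}$ is rotated exactly by $\Theta_j$, so its flat ends $\widetilde I_{j+1},\widetilde{IV}_{j+1}$ also have direction $\Theta_j$ — and hence $\Gamma\setminus\{p\}$ is a $C^1$ arc.

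To prove that $\Gamma$ is asymptotically smooth (equivalently, that it satisfies the asymptotic conformality condition \eqref{ac}) I would use a localization argument. It suffices to check that for every $q\in\Gamma$ and every $\delta>0$ there is $\rho>0$ with $l(z_1,z_2)\le(1+\delta)|z_1-z_2|$ for all $z_1,z_2\in\Gamma\cap D(q,\rho)$: if the corresponding uniform statement failed, then by compactness of $\Gamma$ one could extract points $z_1^n,z_2^n\to q$ with $l(z_1^n,z_2^n)/|z_1^n-z_2^n|>1+\delta_0$, contradicting the local statement at $q$; and $l/|\cdot|\le 1+\delta$ near the diagonal yields \eqref{ac} by the triangle inequality along the arc. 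For $q\ne p$ the local statement is immediate because $\Gamma$ is $C^1$ near $q$, and a $C^1$ arc satisfies it (parametrize by arc length: $l(z_1,z_2)=|s_1-s_2|$ while $|\gamma(s_1)-\gamma(s_2)|/|s_1-s_2|\to 1$ uniformly, by uniform continuity of the unit vector $\gamma'$). For $q=p$ it is exactly Lemma~\ref{errors}: given $\delta$, choose $k$ as in that lemma; then \eqref{sm_length} gives $l(z_1,z_2)\le(1+\delta)|z_1-z_2|$ for all $z_1,z_2\in N_k\supset\Gamma\cap D(p,\rho_k)$.

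To see that $\Gamma$ is not $C^1$ I would show that its unit tangent has no limit at $p$. The patch $\tilde\Lambda_m$ is a rescaled copy of $\Lambda_{\alpha_m}$ rotated by $\Theta_{m-1}=\sum_{l=1}^{m-1}1/l$, and its flat piece $\widetilde I_m$ survives the gluing (it lies inside the segment $\widetilde{II}_{m-1}$ that $\tilde\Lambda_m$ replaces) and is a straight sub-segment of $\Gamma$ in direction $\Theta_{m-1}$ contained in $\tilde\Lambda_m$, hence tending to $p$. Choosing $z_m\in\widetilde I_m$ we get $z_m\to p$, whereas the unit tangent of $\Gamma$ at $z_m$ equals $\pm e^{i\Theta_{m-1}}$ with a sign independent of $m$. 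Since $\Theta_m-\Theta_{m-1}=1/m\to 0$ while $\Theta_{m-1}\to+\infty$, the point $e^{i\Theta_{m-1}}$ winds around the unit circle infinitely often with steps tending to $0$ and therefore does not converge. Hence the unit tangent of $\Gamma$, which is well defined and continuous on $\Gamma\setminus\{p\}$, cannot be continuously extended to $p$, and $\Gamma$ admits no $C^1$ parametrization.

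The step I expect to be the main obstacle is the geometric bookkeeping in the first paragraph: making the recursive construction precise enough to be certain that the patches nest down to a single point $p$, that $\bigcup_{j\ge k}\tilde\Lambda_j$ is genuinely a relative neighbourhood of $p$ in $\Gamma$, and, above all, that each gluing is carried out so that the tangent directions of consecutive pieces agree — so that $\Gamma\setminus\{p\}$ is $C^1$ and the unit tangent on $\widetilde I_m$ is in direction $\Theta_{m-1}$. Once this geometry is in place the analytic input is light: Lemma~\ref{errors} gives asymptotic smoothness near $p$, the piecewise-$C^1$ structure gives it away from $p$, and the divergence of the harmonic series forces the tangent to spin.
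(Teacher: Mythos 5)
Your proposal is correct and follows essentially the same route as the paper: asymptotic smoothness is obtained by combining Lemma~\ref{errors} for pairs of points deep in the spiral (near the accumulation point) with the smoothness of the two arcs forming $\Gamma\setminus\{z_0\}$, and the failure of $C^1$ comes from the divergence of $\sum_j\alpha_j$, which makes the tangent direction wind without converging at $z_0$. Your write-up merely fills in details the paper leaves implicit (the compactness/localization step and the explicit winding-tangent argument for non-$C^1$), so it matches the paper's proof in substance.
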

\begin{proof}
Let $\tilde{z}'_j\in\Gamma$ be the image of the point $z_{\alpha_j}'$ via the map which sends $\Lambda_j$ to $\widetilde{\Lambda}_j$.
We have that the curve $\Gamma$ is not $C^1$ at the point $z_0:=\lim_j z_j,$ where $z_j$ is an arbitrary point of $\tilde{\Lambda}_j$. Indeed, by our choice of the angles in the construction, $\sum_j \alpha_j = +\infty$  and the curve spirals close to $z_0$.\\
Let us now turn prove that the curve is asymptotically smooth.\\
Notice that we may write $\Gamma=\Gamma_1\cup\Gamma_2\cup\{z_0\},$ where $\Gamma_1$ and $\Gamma_2$ are smooth curves. Then, for every couple of points $\{z_1,z_2\}$ in one of those two smooth components we can exploit the smoothness to state that for every $\delta$ there exists $\bar{\epsilon}$ such that for $\epsilon<\bar{\epsilon}$ and $|z_1-z_2|=\epsilon$ we have
\begin{equation}
l(z_1,z_2)\leq (1+\delta)\epsilon.
\end{equation}
This, together with the result of Lemma \ref{errors} concludes the proof.
\end{proof}

Let us consider the arc-length parametrization $\gamma$ of $\Gamma$. Being $\Gamma$ asymptotically smooth, $\gamma$ is bilipschitz. In particular,
\begin{equation}
\frac{1}{C}|x-y|\leq|\gamma(x)-\gamma(y)|\leq |x-y|
\end{equation}
for a constant $C>1$ and $x,y\in [0,L(\Gamma)].$
As in Remark \ref{rem_decay} we denote by $\alpha(x,\epsilon)$ the angle between the vectors $\gamma(x)-\gamma(x-\epsilon)$ and $\gamma(x+\epsilon)-\gamma(x)$.
Because of the geometrical considerations in Remark \ref{rem_decay}, we have that
\begin{equation}\label{geom}
|\gamma(x+\ep)+\gamma(x-\ep)- 2 \gamma(x)|^2\leq \epsilon^2\Big( \frac{1}{C^2}+1-\frac{2}{C}\cos\alpha(x,\epsilon)\Big)
\end{equation}
for $\epsilon>0$ and $x\in[0,L(\Gamma)].$ Now we want to prove the estimate
\begin{equation}
|\gamma(x+\ep)+\gamma(x-\ep)- 2 \gamma(x)|\lesssim \frac{\epsilon}{|\log\epsilon|}.
\end{equation}
Being $\Gamma$ smooth off the point $z_0$ and arguing as in \cite{G}, the logarithmic condition \eqref{logomega} and the estimate \eqref{m2} are satisfied off that point.
Hence it suffices to prove \eqref{m2} for $\gamma(x)\in\cup_{k\geq k_0}\tilde{\Lambda}_k\cap\Gamma$ and $k_0$ big enough.
To do that, we will study the behavior of the angle $\alpha(x,\epsilon)$ and of the local value of the bilipschitz constant of $\gamma$ close to the point $z_0.$
\\
Being the curve asymptotically smooth, as a corollary of Lemma \ref{ml} we know that $\alpha(x,\epsilon)\to 0$ for $\epsilon$ small. Then, the second factor in the right hand side of \eqref{geom} behaves as
\begin{equation}
1+\frac{1}{C^2}-\frac{2}{C}\cos\alpha(x,\epsilon)= \Big[1+\frac{1}{C^2}-\frac{2}{C}\Big]+\frac{1}{C}\alpha(x,\epsilon)^2 + O(\alpha(x,\epsilon)^4)
\end{equation}
for $\epsilon\to 0.$\\
Let $x_0:=\gamma^{-1}(z_0).$ For $\epsilon>0,$ we denote by $C_{\epsilon}$ the smallest constant such that
\begin{equation}
\frac{1}{C_{\epsilon}}|x-y|\leq|\gamma(x)-\gamma(y)|\leq |x-y|
\end{equation}
holds for $x,y\in [x_0-\epsilon,x_0+\epsilon],$ i.e. the local value of the lower bilipschitz constant close to $x_0$.\\
Using this notation, to our purposes it suffices to prove that
\begin{equation}\label{estim_angle}
|\alpha(x,\epsilon)|\lesssim \big|\log\epsilon\big|^{-1}
\end{equation}
and
\begin{equation}\label{coef_old}
\Big[1-\frac{1}{C_{\epsilon}}\Big]\lesssim \big|\log\epsilon\big|^{-1}
\end{equation}
for $\epsilon$ small and $\gamma(x)$ close enough to $z_0.$\\
In particular, instead of \eqref{coef_old} we will prove that the stronger estimate
\begin{equation}\label{estim_lip}
\Big[1-\frac{1}{C_{\epsilon}}\Big]\lesssim \big|\log\epsilon\big|^{-2}
\end{equation}
holds for $\epsilon$ small and $\gamma(x)$ close enough to $z_0.$
The following two lemmas respectively prove the estimate for the angle and the estimate for $C_\epsilon.$
\begin{lemm}\label{lemm_ang}
For every $\epsilon_0$ there exists an integer $k_0$ such that
\begin{equation}
|\alpha(x,\epsilon)|\lesssim |\log\epsilon|^{-1}
\end{equation}
for $\epsilon<\epsilon_0,$ $|x-x_0|<\epsilon_0$ and $\gamma(x-\epsilon)\in\bigcup_{k=k_0}^{\infty}\widetilde{\Lambda}_k\cap\Gamma.$
\end{lemm}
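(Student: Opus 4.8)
The plan is to read $\alpha(x,\epsilon)$ off the triangle with vertices $\gamma(x-\epsilon),\gamma(x),\gamma(x+\epsilon)$, exactly as in Remark~\ref{rem_decay}, and to reduce the desired estimate to a quantitative bound on the chord--arc defect of the sub-arc $\gamma([x-\epsilon,x+\epsilon])$. Put $A=|\gamma(x)-\gamma(x-\epsilon)|$, $B=|\gamma(x+\epsilon)-\gamma(x)|$, $C=|\gamma(x+\epsilon)-\gamma(x-\epsilon)|$. Since $\gamma$ is a bilipschitz arc-length parametrization we have $A,B\asymp\epsilon$ and $C\le A+B\le 2\epsilon$, and the cosine theorem at the vertex $\gamma(x)$ gives
\[
1-\cos\alpha(x,\epsilon)=\frac{(A+B-C)(A+B+C)}{2AB}.
\]
Using $A+B+C\le 4\epsilon$, $AB\gtrsim\epsilon^2$ and the elementary inequality $\alpha^2\lesssim 1-\cos\alpha$ valid for $\alpha\in[0,\pi]$, this yields
\[
\alpha(x,\epsilon)^2\ \lesssim\ \frac{A+B-C}{\epsilon}\ \le\ \frac{2\epsilon-C}{\epsilon},
\]
where $2\epsilon-C$ is the length of $\gamma([x-\epsilon,x+\epsilon])$ minus the length of its chord. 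Hence it suffices to show that the relative chord--arc defect of $\gamma([x-\epsilon,x+\epsilon])$ is $\lesssim|\log\epsilon|^{-2}$.

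Next I would pin down the scale. Since $\prod_j\cos\alpha_j$ converges to a strictly positive number, $L_j\asymp 4^{-j}$, and the arc-length distance from any point of $\widetilde\Lambda_k$ to $z_0$ is a geometric tail $\sum_{j\ge k}(\text{length of the unreplaced part of }\widetilde\Lambda_j)\asymp L_k$. Therefore $\gamma(x-\epsilon)\in\widetilde\Lambda_k$ with $k\ge k_0$ forces $|x-\epsilon-x_0|\asymp 4^{-k}$, so $k\asymp\log(1/|x-\epsilon-x_0|)$. Two regimes occur. If $\epsilon\lesssim L_k$ then $\gamma([x-\epsilon,x+\epsilon])$ stays inside the region covered by boundedly many consecutive patches $\widetilde\Lambda_{k'}$ with $k'\asymp k$; if $\epsilon\gtrsim L_k$ it spreads out only as far as the patches of index $k''$ with $4^{-k''}\asymp\epsilon$ (and, once it reaches $z_0$, possibly onto a sub-arc of the smooth piece $\Gamma_2$ issuing from $z_0$), so that $k''\asymp|\log\epsilon|$ and $k''\le k$. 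In either case the largest patch met by $\gamma([x-\epsilon,x+\epsilon])$ has index comparable to $|\log\epsilon|$; choosing $k_0=k_0(\epsilon_0)$ large forces this index to be $\ge k_0$, which is what makes the constants below uniform.

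For the defect itself I would use the explicit structure of the patches. On $\widetilde\Lambda_j$ the curve is affine off three smoothing sub-arcs, on each of which it is the graph of a function whose slope varies by $O(\alpha_j)$ over a parameter interval of length $\asymp\xi L_j$ ($\xi$ being the mollification scale from the definition of the $\alpha$-patches). Modeling the portion of the curve carried by $\widetilde\Lambda_j$ as a straight segment plus a graph perturbation $p_j$ with $|p_j'|\lesssim\alpha_j$ supported on a set of measure $\lesssim\xi L_j$, its contribution to the arc--chord defect is $\lesssim\int|p_j'|^2\,dt\lesssim\alpha_j^2 L_j\asymp L_j/j^2$, while a smooth sub-arc of $\Gamma_2$ of length $\lesssim\epsilon$ contributes $O(\epsilon^3)$. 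Summing over the patches met by $\gamma([x-\epsilon,x+\epsilon])$, whose indices start at $k''\asymp|\log\epsilon|$, gives a geometric series dominated by its first term, so the defect is $\lesssim L_{k''}/(k'')^2\asymp\epsilon/|\log\epsilon|^2$ and the relative defect is $\lesssim|\log\epsilon|^{-2}$. Plugging this into the first step yields $\alpha(x,\epsilon)\lesssim|\log\epsilon|^{-1}$. (In the regime $\epsilon\lesssim L_k$ one can argue more directly: the arc then meets a single smoothing sub-arc of a patch $\widetilde\Lambda_j$ with $j\asymp k$, so the turning of the unit tangent along it is $\lesssim\epsilon/(\xi jL_j)$, and the inequality $t|\log t|\lesssim L_k|\log L_k|\asymp kL_k$ valid on $(0,L_k)$ converts this bound into $\lesssim|\log\epsilon|^{-1}$.)

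The step I expect to be the main obstacle is the defect estimate when $\epsilon\gtrsim L_k$, i.e. when $\gamma([x-\epsilon,x+\epsilon])$ winds deeply toward $z_0$: there the total variation of the unit tangent along the arc behaves like $\sum_{j\ge k''}\alpha_j$, which diverges as the arc reaches deeper, so one cannot bound $\alpha(x,\epsilon)$ by the turning of $\gamma$ and must argue purely with lengths. What has to be made precise is that the defect is concentrated at the outermost scale $\asymp\epsilon$ --- the deeper patches being summably small because the scales $L_j$ decay geometrically --- and that the pairs of points of $\gamma([x-\epsilon,x+\epsilon])$ whose tangents differ by more than a full turn, which in the identity $\int\!\!\int(1-\cos(\theta(s)-\theta(t)))\,ds\,dt$ could in principle spoil the bound, fill only a set of doubly-exponentially small measure and hence contribute negligibly. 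This is also where the hypothesis $\gamma(x-\epsilon)\in\bigcup_{k\ge k_0}\widetilde\Lambda_k$ and the largeness of $k_0$ enter, to keep the arc from doubling back on itself near $z_0$ at the relevant order. The rest is bookkeeping: tracking the number of relevant patches and the dependence on $\xi$ so that the constant in $\alpha(x,\epsilon)\lesssim|\log\epsilon|^{-1}$ does not deteriorate.
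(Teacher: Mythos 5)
Your reduction is fine: the cosine identity $1-\cos\alpha(x,\epsilon)=\frac{(A+B-C)(A+B+C)}{2AB}$ together with $A,B\asymp\epsilon$ does give $\alpha(x,\epsilon)^2\lesssim(2\epsilon-C)/\epsilon$, and your parenthetical treatment of the regime $\epsilon\ll L_k$ (curvature $\lesssim\alpha_j/(\xi L_j)$ of a single smoothing arc plus the monotonicity of $t|\log t|$) is sound --- indeed it handles a point more explicitly than the paper does. But the proof as proposed has a genuine gap exactly where you yourself locate "the main obstacle": the claim that the chord--arc defect of $\gamma([x-\epsilon,x+\epsilon])$ is $\lesssim\epsilon|\log\epsilon|^{-2}$ when the arc reaches or straddles $z_0$. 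Summing per-patch defects, each measured against that patch's \emph{own} chord, does not bound the defect against the single long chord: concatenating sub-arcs produces additional terms of the form $\mathrm{chord}_1+\mathrm{chord}_2-\mathrm{chord}_{12}$, governed by the bending between consecutive chords, i.e.\ by the rotations $\alpha_j$ between patch axes and, worst of all, by the junction of the incoming and outgoing branches at $z_0$, along which the tangent turns by a divergent total amount ($\sum_j\alpha_j=\infty$). One has to show that these bending contributions, weighted by the geometrically decaying lengths $L_j\asymp4^{-j}$, still sum to $O(L_k\alpha_k^2)$; this is precisely the computation behind the quantity $R_k$ in Lemma~\ref{errors} and it is the heart of the matter. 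Your sketch only gestures at it (the identity $\iint(1-\cos(\theta(s)-\theta(t)))\,ds\,dt$ and the unsubstantiated "doubly-exponentially small measure" remark), and the "geometric series dominated by its first term" step is not justified, since the naive bound on the cumulative turning over the $O(\log k)$ scales one must resolve before cutting off the deep part gives $(\log k/k)^2$, not $1/k^2$; only the length-weighted summation rescues the exponent. Also, a small inaccuracy: the tent perturbation $p_j$ has $|p_j'|\asymp\alpha_j$ on a set of measure $\asymp L_j$, not $\xi L_j$ (your final bound $\alpha_j^2L_j$ is nevertheless the right size).

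It is worth noting that your route demands a strictly stronger, quadratic estimate than the lemma needs. The paper argues linearly and more directly: with $k_-$ the index of the patch containing $\gamma(x-\epsilon)$, it fixes the line $L_z$ through $\gamma(x)$ parallel to $\widetilde{II}_{k_-}$ and shows, by cone and rectangle containments coming from the patch structure (as in Lemmas~\ref{ml} and~\ref{errors}), that each of the chords $[\gamma(x-\epsilon),\gamma(x)]$ and $[\gamma(x),\gamma(x+\epsilon)]$ makes an angle $\lesssim\alpha_{k_-}$ with $L_z$, whence $|\alpha(x,\epsilon)|\lesssim\alpha_{k_-}=1/k_-$, and then converts the scale relation between $\epsilon$ and $L_{k_-}$ into $1/k_-\lesssim|\log\epsilon|^{-1}$. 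If you want to keep your defect-based approach, you must carry out the weighted multi-scale estimate across the $z_0$ junction (e.g.\ via the double-integral identity with the geometric weights $L_jL_{j'}$ and angle differences $\lesssim\alpha_k(|j-j'|+1)$); as written, that step is asserted rather than proved.
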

\begin{proof}
Let $\epsilon>0$ and $z=\gamma(x)\in \Gamma.$ Moreover, let us define $z_{\pm}:=\gamma(x\pm \epsilon).$
Let $k$ be the maximum index such that $z\in\tilde{\Lambda}_{k}$ and let $k_\pm$ be the maximum index such that $z_\pm\in\tilde{\Lambda}_{k_\pm}$. Without loss of generality, we will prove the lemma for $x<x_0.$ Let us proceed with some geometrical consideration.\\
\begin{figure}[h!]
\caption{A schematic representation of the setting of the proof.}
\vspace{0.5cm}
\centering
\includegraphics[width=0.65\textwidth]{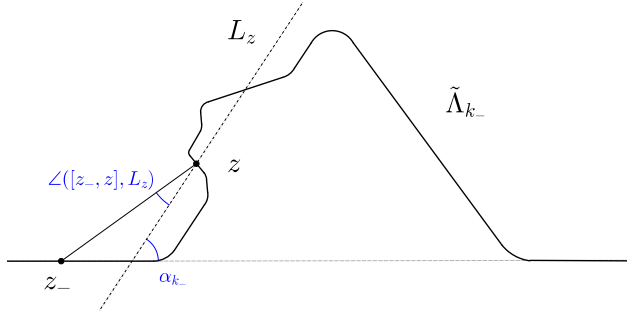}
\end{figure} 
Let $L_z$ denote the line passing through $z$ and parallel to the segment $\widetilde{II}_{k_-}.$
Due to the definition of the angle $\alpha(x,\epsilon)$, we can fix the line $L_z$ and bound $|\alpha(x,\epsilon)|$ by the absolute value of the smallest angle $\angle([z_-,z],L_z)$ that $L_z$ forms with the segment $[z_-,z]$ plus the absolute value of the smallest angle $\angle([z,z_+],L_z)$ that $L_z$ forms with the segment $[z,z_+].$\\
If $z$ belongs to $\widetilde{\Lambda}_{k_-},$ due to the properties of the $\alpha_{k_-}-$patch, the arc $\gamma([x-\epsilon, x])$ is entirely contained in a cone of vertex $z$ and aperture $\angle([z_-,z],L_z)$. By elementary geometric considerations, we can write
\begin{equation}\label{angle_1}
\big|\angle([z_-,z],L_z)\big|\leq \alpha_{k_-}.
\end{equation}
Again, due to few geometric observations (that are not substantial for the sequel and we decide to omit in order to make the proof more concise) and to the way $\Gamma$ is defined, it is not difficult to see that
\begin{equation}\label{angle_2}
\big|\angle([z_+,z],L_z)\big|\leq 2\alpha_{k_-}.
\end{equation}
We are left to consider the case $z\not\in\widetilde{\Lambda}_{k_-}.$ As we observed in Lemma \ref{errors}, in this case we have $|z_--z|\geq L_{k_-+1}/4.$ Moreover, $\bigcup_{j=k_-+1}^{\infty}\widetilde{\Lambda}_j\cap \Gamma$ is contained in a rectangle whose base lays on $\widetilde{II}_{k_-},$ whose length is smaller than, say, $5L_{k_-+1}/3$ and with height $h_{k_-+1}$ (for its definition we refer to \eqref{height} in Lemma \ref{errors}). We recall that
\begin{equation}
\frac{h_j}{L_j}\to 0 \text{ for } j\to\infty.
\end{equation}
Now observe that $z_+\in\bigcup_{j=k_-}^{\infty}\widetilde{\Lambda}_j\cap \Gamma.$ For every point $z$ in this rectangle, using that $|z-z_+|\gtrsim L_{k_-+1}$, it holds that
\begin{equation}\label{angle_3}
|\angle([z_-,z],L_z)|\lesssim \alpha_{k_-}
\end{equation}
and
\begin{equation}\label{angle_4}
|\angle([z,z_+],L_z)|\lesssim \alpha_{k_-}.
\end{equation}
Joining \eqref{angle_1},\eqref{angle_2},\eqref{angle_3} and \eqref{angle_4}, we get
\begin{equation}
|\alpha(z,\epsilon)|\lesssim \alpha_{k_-}.
\end{equation}
Then, by the construction of $\Gamma$ and the definition of $L_m$, $L_{m+1}/L_m\leq 1/2$ for every $m$, that by iteration leads to
\begin{equation}
L_m\leq 2^{-m}.
\end{equation}
Now, if $\gamma(x-\epsilon)\in \widetilde{\Lambda}_{k_-}$ for $k_-$ big enough, we have that $\epsilon\lesssim L_{k_-}$ so that
\begin{equation}
k_-\gtrsim  |\log\epsilon|
\end{equation}
for $\epsilon$ small enough.
So, gathering all the considerations and recalling that $\alpha_{k_-}=1/k_-$, we get the desired result.
\end{proof}

\begin{lemm}There exists $\epsilon_1>0$ such that the inequality \eqref{estim_lip} holds for $\epsilon<\epsilon_1$.
\end{lemm}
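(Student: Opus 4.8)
The plan is to read \eqref{estim_lip} as a quantitative comparison between arc--length and chord near $z_0$ and to exploit the self--similar structure of $\Gamma$. Since $\gamma$ is the arc--length parametrization, $1-1/C_\epsilon=1-\inf\{\,|\gamma(x)-\gamma(y)|/l(\gamma(x),\gamma(y)):x,y\in[x_0-\epsilon,x_0+\epsilon]\,\}$, so it suffices to prove that for such $x,y$
\[
l(\gamma(x),\gamma(y))\le(1+C k^{-2})\,|\gamma(x)-\gamma(y)|,
\]
where $k=k(\epsilon)$ is the largest index with every point of $\Gamma$ at arc--length $\le\epsilon$ from $z_0$ lying in $\bigcup_{j\ge k}\widetilde\Lambda_j\cap\Gamma$. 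Because $L_{n+1}/L_n\to 1/4$ (so $\sum_{l\ge n}L_l\asymp L_n$ and $L_{n+m}\le 2^{-m}L_n$) and $L_n\ge 2^{1-2n}$, the arc--length from $z_0$ to $\widetilde\Lambda_k\cap\Gamma$ is $\asymp L_k$, hence $L_k\asymp\epsilon$ and $k\gtrsim|\log\epsilon|$; thus $Ck^{-2}\lesssim|\log\epsilon|^{-2}$ and the lemma follows from the display.

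Two features of the construction are used throughout: inside $\widetilde\Lambda_j$ the unit tangent of $\gamma$ stays in an arc of length $\le 4\alpha_j=4/j$ (it parametrizes the graph of a function with derivative in $[-\tan\alpha_j,\tan\alpha_j]$), so the chord of $\widetilde\Lambda_j$ and of each of its two halves differs from the corresponding arc by at most $C\alpha_j^2$ times that arc, and the chord direction of the incoming half of $\widetilde\Lambda_j$ equals its rotation angle $R_j:=\sum_{l<j}\alpha_l$ up to $O(\alpha_j)$; and the global bilipschitz bound for $\gamma$ gives $|\gamma(x)-\gamma(y)|\gtrsim l(\gamma(x),\gamma(y))$. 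The core is a defect estimate: if $z$ has depth $k_1$ (the largest index with $z\in\widetilde\Lambda_{k_1}\cap\Gamma$) and lies within arc--length $\epsilon$ of $z_0$, then $l(z,z_0)\asymp|z-z_0|\asymp L_{k_1}$, $k_1\gtrsim|\log\epsilon|$, and
\[
l(z,z_0)-|z-z_0|\le C L_{k_1}/k_1^2 .
\]
I would prove the last bound by telescoping over patches: with $q_j$ the endpoint of the half of $\widetilde\Lambda_j$ nearer to $z_0$ and $E_j:=l(q_j,z_0)-|q_j-z_0|$, the triple $q_j,q_{j+1},z_0$ is nearly collinear since the chord directions of $[q_j,q_{j+1}]$ and $[q_{j+1},z_0]$ are $R_j+O(\alpha_j)$ and $R_{j+1}+O(\alpha_{j+1})$, differing by $O(\alpha_j)$; this gives $E_j\le E_{j+1}+C\alpha_j^2L_j$, and iterating with $\alpha_l^2L_l\le 2^{-(l-k_1)}L_{k_1}/k_1^2$ yields $E_{k_1}\le C L_{k_1}/k_1^2$ (a $z$ interior to $\widetilde\Lambda_{k_1}$ costs one more near--collinear vertex). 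The same works on the other branch of $\Gamma\setminus\{z_0\}$, where a patch contributes turning $3\alpha_j$.

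To finish I would split into cases by the positions of $\gamma(x),\gamma(y)$, with depths $k_1\le k_2$. Same branch and $k_2\le k_1+1$: the joining arc meets only two patches, so its turning is $\le C/k_1$ and $l(\gamma(x),\gamma(y))-|\gamma(x)-\gamma(y)|\le C\,l(\gamma(x),\gamma(y))/k_1^2$. Same branch and $k_2\ge k_1+2$: then $l(\gamma(x),\gamma(y))\asymp L_{k_1}$, while $l(\gamma(x),\gamma(y))-|\gamma(x)-\gamma(y)|\le l(\gamma(x),z_0)-|\gamma(x)-z_0|\le C L_{k_1}/k_1^2$ by the defect estimate. Opposite branches: the joining arc runs through $z_0$ and winds infinitely there, so there is no good single reference tangent; instead bound $l(\gamma(x),\gamma(y))-|\gamma(x)-\gamma(y)|$ by $[l(\gamma(x),z_0)-|\gamma(x)-z_0|]+[l(z_0,\gamma(y))-|z_0-\gamma(y)|]+\Delta$ with $\Delta:=|\gamma(x)-z_0|+|z_0-\gamma(y)|-|\gamma(x)-\gamma(y)|\ge 0$. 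The first two terms are $\le C L_{k_1}/k_1^2$; for $\Delta$ the point is that the two strands are anti--collinear at $z_0$ modulo a rotation shared by both, so the angle $\psi$ at $z_0$ satisfies $\cos\psi=-\cos(R_{k_2}-R_{k_1})+O(\alpha_{k_1}+\alpha_{k_2})$ with $R_{k_2}-R_{k_1}=\sum_{l=k_1}^{k_2-1}\alpha_l$, whence $1+\cos\psi\lesssim((k_2-k_1+1)/k_1)^2$; combining this with $\Delta\lesssim(1+\cos\psi)\min(|\gamma(x)-z_0|,|z_0-\gamma(y)|)\lesssim(1+\cos\psi)\,2^{-(k_2-k_1)}L_{k_1}$ and $\sup_m m^2 2^{-m}<\infty$ gives $\Delta\le C L_{k_1}/k_1^2$. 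In every case $l(\gamma(x),\gamma(y))-|\gamma(x)-\gamma(y)|\lesssim L_{k_1}/k_1^2$ and $|\gamma(x)-\gamma(y)|\gtrsim l(\gamma(x),\gamma(y))\gtrsim L_{k_1}$ with $k_1\gtrsim|\log\epsilon|$, giving \eqref{estim_lip}. I expect the opposite--branch case to be the main difficulty: one must check that the incoming and outgoing strands carry the same cumulative rotation, so their radial directions at $z_0$ stay nearly antipodal, and that any residual opening angle is absorbed by the geometric decay of the scales attached to the deeper endpoint.
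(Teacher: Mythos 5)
Your proposal is correct in substance but takes a genuinely different route from the paper. The paper recycles the machinery of Lemma \ref{errors}: it compares $|z_1-z_2|$ with $l(z_1,z_2)$ case by case using the single-patch estimate \eqref{angle_corner}, the projection of $z_2$ onto $\widetilde{II}_{k_1}$, and the error quantities $R_{k_1+1}$ and $h_{k_1+1}$, and then extracts the exponent from the Taylor expansion of $\cos\alpha_{k_1}=\cos(1/k_1)$ together with $k_1\gtrsim|\log\epsilon|$. You instead prove a telescoping arc--chord defect estimate toward the spiral point, $l(z,z_0)-|z-z_0|\lesssim L_{k_1}/k_1^{2}$, by summing $\alpha_j^{2}L_j$ over the deeper patches, exploit monotonicity of the defect along the arc for same-branch pairs, and treat opposite-branch pairs by splitting the chord at $z_0$ and bounding the triangle defect there via near-antipodality of the two radial directions, the geometric decay $L_{k_2}\lesssim 2^{-(k_2-k_1)}L_{k_1}$ absorbing the accumulated rotation $(k_2-k_1)/k_1$. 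Your route is longer than the paper's half-page argument, but it is more quantitative: it produces the $k_1^{-2}$ rate in every case, whereas the paper's bound in the ``remaining cases'', as written, contains the term $h_{k_1+1}/l(z_1,z_2)\lesssim h_{k_1+1}/L_{k_1+1}\asymp \alpha_{k_1+1}$, which by itself only yields the exponent $1$ of \eqref{coef_old} (the estimate actually used for the main theorem); so your argument arguably does a better job of justifying the stated exponent $2$ in \eqref{estim_lip}.

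One step in your opposite-branch case must be restated, since as displayed it does not imply what you conclude: from $\cos\psi=-\cos(R_{k_2}-R_{k_1})+O(\alpha_{k_1}+\alpha_{k_2})$ one only gets $1+\cos\psi\lesssim\big((k_2-k_1)/k_1\big)^{2}+1/k_1$, and the additive $1/k_1$ swamps $1/k_1^{2}$ when $k_2$ is close to $k_1$. The error must be placed at the level of the angle, $\psi=\pi-(R_{k_2}-R_{k_1})+O(\alpha_{k_1}+\alpha_{k_2})$, which the construction does give: $z_0$ lies within height $O(\alpha_{k_i+1}L_{k_i+1})$ of the relevant ramp line, while a depth-$k_i$ point is separated from $z_0$ by $\gtrsim L_{k_i+1}$ essentially along that line, so each radial direction deviates by only $O(\alpha_{k_i})$ from (minus) the corresponding ramp direction. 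Then indeed $1+\cos\psi\lesssim\big((k_2-k_1+1)/k_1\big)^{2}$, and with this correction your argument goes through.
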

\begin{proof}
Let us consider $z_1,z_2\in\Gamma$. Let $k_1$ be the maximum index such that $z_1\in\tilde{\Lambda}_{k_1}$ and $k_2$ the maximum index such that $z_2\in\tilde{\Lambda}_{k_2}$. Without loss of generality, $k_1\leq k_2$ and $\gamma^{-1}(z_1)\leq \gamma^{-1}(\tilde{z}).$ The idea is to prove that $C^{-1}_{\epsilon}$ is greater than a quantity which approximates $\cos\alpha_{k_1}$. It is convenient to split the study into different cases.\\
If $k_1=k_2$ and $\gamma^{-1}(z_2)<\bar{x}$ or $k_2=k_1+1$ and $z_2\in\widetilde{I}_{k_1+1},$ then \eqref{angle_corner} gives
\begin{equation}
|z_1-z_2|\geq \cos\alpha_{k_1}l(z_1,z_2).
\end{equation}
If $k_1=k_2$ and $\gamma^{-1}(z_2)>\bar{x}$ or $k_2=k_1+1$ and $z_2\in\widetilde{IV}_{k_1+1}$, then we can write
\begin{equation}
|z_1-z_2|\geq \cos\alpha_{k_1}\big(l(z_1,z_2)-R_{k_1+1}\big)=\Big(\cos\alpha_{k_1}-\cos\alpha_{k_1}\frac{R_{k_1+1}}{l(z_1,z_2)}\Big)l(z_1,z_2)
\end{equation}
and we recall that
\begin{equation}
\frac{R_{k_1+1}}{l(z_1,z_2)}\lesssim \frac{R_{k_1+1}}{L_{k_1+1}}\to 0 \qquad\text{ for }\qquad k_1\to\infty.
\end{equation}
In the remaining cases, we know from the proof of Lemma \ref{errors} that
\begin{equation}
|z_1-z_2|\geq \Big(\cos\alpha_{k_1}-\cos\alpha_{k_1}\frac{h_{k_1+1}}{l(z_1,z_2)}-\cos\alpha_{k_1}\frac{R_{k_1+1}}{l(z_1,z_2)}\Big)l(z_1,z_2),
\end{equation}
so that, using the same argument as at the end of the proof of Lemma \ref{lemm_ang} together with the Taylor expansion for the cosine, the proof is completed. Let us remark that the exponent $2$ in \eqref{estim_lip} appears because of the Taylor expansion.
\end{proof}
The two previous lemmas show that the arc-length parametrization $\gamma$ of $\Gamma$ is such that the estimate
\begin{equation}
T_*(f)(z)\lesssim M^2(Tf)(z)
\end{equation}
holds for every $z\in \Gamma.$
\subsection*{Final remarks on the curve $\Gamma$.}
The curve $\Gamma$ that we studied in this section can be considered as an example of a critical curve for which the main theorem holds. Indeed, another look at the estimates we got tells that most of those concerning the geometry of the curve are close to being sharp. Moreover, the finite second difference $|\gamma(x+\epsilon)+\gamma(x-\epsilon)-2\gamma(x)|$ has the right decay we need; the choice of a slower decay for the angles $\alpha_j$ causes worse estimates for $|\alpha(x,\epsilon)|$ and, hence, the finite second difference estimate to fail. Let us notice that the spiraling of $\Gamma$ close to the point $z_0$ also gives an idea of how the critical curves may look like.\\
Asymptotically smooth curves that are not $C^1$ may also be defined by means of complex analysis (exploiting, for example, the results in \cite{Pomm}) but we found a constructive approach more convenient to our purposes.
%%%%%%%%%%%%%%%%%%%%%%%%%%%%%%%%%%%%%
%BIBLIOGRAPHY STARTS HERE%%%%%%%%%%%%
%%%%%%%%%%%%%%%%%%%%%%%%%%%%%%%%%%%%%

\label{Bibliography}

\end{document}